\documentclass{article}
\pdfoutput=1

\usepackage{a4wide, amsthm, color, verbatim, multicol, hanshopf, float, subfig, shuffle}
\usepackage{difftrees}

\usepackage[small]{diagrams}

\newtheorem{theorem}{\bf{Theorem}}[section]
\newtheorem{lemma}[theorem]{\bf{Lemma}}
\newtheorem{proposition}[theorem]{\bf{Proposition}}

\theoremstyle{definition}
\newtheorem{example}[theorem]{\bf{Example}}

\newtheoremstyle{remark}{\topsep}{\topsep}
{}
{} 
{\bfseries} 
{.} 
{ } 
{\thmname{#1}\thmnumber{ #2}\thmnote{ #3}}

\theoremstyle{definition}
\newtheorem{remark}[theorem]{\bf{Remark}}

\theoremstyle{definition}
\newtheorem{definition}[theorem]{\bf{Definition}}

\def \B+{\operatorname{B}}
\def \Hom{\operatorname{Hom}}
\def \End{\operatorname{End}}

\def \Id{\operatorname{Id}}
\def \k{\operatorname{k}}

\def \NT{\operatorname{NT}}
\def \Lie{\operatorname{Lie}}
\newcommand{\car}{\curvearrowright}

\bibliographystyle{plain}

\begin{document}

\thispagestyle{empty}

\title{Backward error analysis and the substitution law for Lie group integrators}

\date{}

%\author{Alexander Lundervold \footnote{Corresponding author} \and Hans Munthe-Kaas}

%\address{
%Department of Mathematical Sciences\\
%Norwegian University of Science and Technology\\
%N-7491 Trondheim,
%Norway\\
%email:\,\tt{alexander.lundervold@gmail.com}
%\\[4pt]
%Department of Mathematics\\
%University of Bergen\\
%Postbox 7800, N-5020 Bergen, Norway\\
%email:\,\tt{hans.munthe-kaas@math.uib.no}
%}

\author{Alexander Lundervold \footnote{Corresponding author. Department of Mathematical Sciences, Norwegian University of Science and Technology, N-7491 Trondheim, Norway. alexander.lundervold@gmail.com} \and Hans Munthe-Kaas \footnote{Department of Mathematics, University of Bergen, N-5020 Bergen, Norway. hans.munthe-kaas@math.uib.no}}
\maketitle

\begin{comment}

{\small
\noindent \textbf{Keywords:} Backward error analysis, Butcher series, Hopf algebras, Lie group integrators, Lie--Butcher series, rooted trees, substitution law \\
\textbf{Mathematics Subject Classification (2010):} 65L05, 65L06, 37C10\\

Communicated by Elizabeth Mansfield
}
\end{comment}

\begin{abstract} Butcher series are combinatorial devices used in the study of numerical methods for differential equations evolving on vector spaces. More precisely, they are formal series developments of differential operators indexed over rooted trees, and can be used to represent a large class of numerical methods. The theory of backward error analysis for differential equations has a particularly nice description when applied to methods represented by Butcher series. For the study of differential equations evolving on more general manifolds, a generalization of Butcher series has been introduced, called Lie--Butcher series. This paper presents the theory of backward error analysis for methods based on Lie--Butcher series. 
\end{abstract}

\section{Introduction}
A fundamental tool in the field of numerical integration of ordinary differential equations on $\RR^n$ is the theory of \emph{Butcher series} (B-series). These are formal series expansions of vector fields and flows, expanded over the set of rooted trees. Many numerical methods can be formulated in terms of B-series, and they can be used to, for example, study order theory, structure preserving properties of integrators, backward error analysis and modified vector fields \cite{butcher1972aat, hairer2006gni, chartier2007nib, chartier2005asl, chartier2009aat, chartier2010aso, lundervold2011oas}. In the more general setting of differential equations of the form
\begin{equation}\label{diffeq}
y' = F(y), \hspace{0.2cm} y \in M, \,\, F: M \rightarrow TM,
\end{equation}
where $\M$ is a (homogeneous) manifold and $F$ a vector field on $\M$, the role of B-series is played by the \emph{Lie--Butcher series} (LB-series) \cite{munthe-kaas2008oth, lundervold2009hao, lundervold2011oas}. Considering the importance of classical B-series, LB-series are objects of great interest. 

The B-series are based on the \emph{elementary differentials} associated to vector fields, and these can be constructed as homomorphisms from the free \emph{pre-Lie algebra} (or \emph{Vinberg algebra}) into the pre-Lie algebra of vector fields \cite{calaque2009tih}. In the setting of LB-series we get a similar picture, only now the pre-Lie algebras are replaced by the so-called \emph{post-Lie algebras}, defined in \cite{vallette2007hog, munthe-kaas2012opl}.

In the present paper we will explore the \emph{substitution law} for Lie--Butcher series, formulated in the language of enveloping algebras of post-Lie algebras: the \emph{D-algebras} of \cite{munthe-kaas2008oth}. Once the substitution law is understood, it can be applied to \emph{backward error analysis}. The basic idea of backward error analysis is to interpret the numerical solution of a differential equation as the exact solution of a modified equation, and then use this equation to study the numerical method. Analogous to classical backward error analysis (as developed in \cite{hairer1994bao, hairer2006gni, chartier2007nib, calaque2009tih}), its generalization to the Lie group setting has a particularly nice description for methods based on Lie--Butcher series.

Note that the construction of series expansions in the present paper is purely formal: there will be no study of convergence. This separation between the algebraic and the analytic framework for backward error analysis is also present in the setting of B-series, where the main algebraic references are \cite{hairer2006gni, chartier2007nib, calaque2009tih, chartier2010aso} and the analytic references are \cite{benettin1994oth, reich1999bea, hairer2006gni}. An analytic study of backward error analysis for Lie group methods can be found in \cite{faltinsen2000bea}.

The present study of  the backward error and substitution law for Lie group integrators is interesting from a purely algebraic point of view, as this work provides an explicit description of automorphisms of post-Lie algebras. From a numerical point of view, the theory has several applications. The algebraic structures of backward error analysis is important in the analysis of numerical integration algorithms. Additionally, in the case of classical B-series, such algebraic techniques have recently been applied more directly as a computational tool~\cite{chartier2007nib}. Similar techniques in the setting of Lie group integrators is a promising approach to structure preserving integration of problems of computational mechanics, such as Lie-Poisson systems.

\section{Lie--Butcher series}

In this section we will define D-algebras, and show how they give rise to Lie--Butcher series. In the next section we will apply them to the study of the substitution law and backward error analysis for Lie group integrators on manifolds.

\subsection{Trees and D-algebras}

\paragraph{Ordered rooted trees and forests.}\label{sect:trees}
Some basic definitions follow. For a more comprehensive introduction to the combinatorics of trees applied to numerical integration, see \cite{butcher2008nmf} or \cite{iserles2000lgm}. Let $\OT$ denote the alphabet of all ordered (i.e. planar) rooted trees:
\[\OT = \{\ab, \, \aabb, \, \aababb, \, \aaabbb, \, \aabababb, \, \aabaabbb, \, \aaabbabb, \, \aaababbb, \, \ldots \}.\]
The root is the bottom vertex and we consider the trees to grow upwards from the root. The trees being ordered implies that $\aabaabbb\neq\aaabbabb$. This is different from classical B-series theory, where the order of the branches is of no significance. Let $\OF$ denote the set of ordered forests, i.e. all possible empty and non-empty words written with letters from the alphabet $\OT$:
\[\OF = \left\{\one, \,\, \ab, \,\, \ab\,\ab, \,\, \aabb, \,\, \ab\,\ab\,\ab, \,\, \aabb\,\ab, \,\, \ab\,\aabb, \,\, \aababb, \,\, \aaabbb, \,\, \cdots\right\},\] 
where $\one$ denotes the empty word. On $\OF$ we define the \emph{concatenation product} $\omega_1,\omega_2 \mapsto \omega_1\omega_2$, which creates a longer word by joining $\omega_1$ and $\omega_2$ end-to-end. This is an associative, non-commutative product with unit $\one$. Let $B^+ \colon \OF\rightarrow \OT$ denote the operation of adding a root to a word, e.g. $B^+(\ab\aabb)=\aabaabbb$. All of $\OF$ is generated from $\one$ by concatenation and adding roots. The \emph{order} of a forest, $|\omega| = |\tau_1\ldots \tau_k|$, is defined by the recursion $|\one|=0$, $|\tau_1\ldots\tau_k| = |\tau_1|+\cdots+|\tau_k|$, $|B^+\omega| = |\omega|+1$, i.e.\ the order counts the number of vertices in a forest. Let $\k$ be a field of characteristic 0, e.g.\ $\k=\RR$ or $\k=\CC$. The $\k$-vector space of all finite $\k$-linear combinations of elements in $\OF$ is the non-commutative polynomial ring over $\OT$ \footnote{$\N$ with concatenation product can equivalently be defined as the linear space spanned by trees, $V=k\{\OT\}$, equipped with a tensor product. Hence $\N$ can be defined as the tensor algebra on $V$. However, because we need other tensor products later we prefer the definition via concatenation of words.}, denoted by $\N = \k\langle \OT \rangle$. The $\k$-vector space of infinite linear combinations of $\OF$ is $\N^* = \k\langle\langle\OT\rangle\rangle$. $\N^*$ is  the dual space of $\N$, with the dual pairing $\langle\cdot,\cdot\rangle \colon \N^*\times \N\rightarrow \k$ defined such that the words in $\OF$ form a orthonormal basis: $\langle\omega_1,\omega_2\rangle=0$ if $\omega_1\neq\omega_2$, and $\langle\omega,\omega\rangle=1$. Thus for $a\in \N^*$ we have $a(\omega) = \langle a,\omega\rangle$ and $a = \sum_{\omega\in \OF} a(\omega)\omega$. In the latter sum we understand $\N^*$ as the projective limit $\N^* =\underleftarrow{\lim} \, \N_k$, where $\N_k = \mbox{span}\{\omega\in \OF \colon |\omega|\leq k\}$. An infinite $a\in \N^*$ is uniquely defined by its finite projections $a_k\in \N_k$ for all $k\in \ZZ$, where $a_k = \sum_{|\omega|\leq k}a(\omega)\omega$ is the orthogonal projection of $a$ onto the subspace $\N_k\subset \N$.
\begin{remark}
In many applications it is necessary to generalize to spaces built from trees with colored vertices. The theory extends from the above presentation with only minor modifications. Let $\C$ be a (finite or infinite) set of colors. A coloring of a tree or a forest is a map from its vertices to $\mathcal{C}$. Let  $\OT_{\mathcal{C}}$ and $\OF_{\mathcal{C}}$ denote colored trees and forests. For each $c\in \C$ we have the operation $B^+_c\colon \OF_{\mathcal{C}}\rightarrow \OT_{\mathcal{C}}$ creating a tree by adding a root of color $c$ to a word.
We identify $\C\subset \OT_{\mathcal{C}}\subset \OF_{\mathcal{C}}$ as the subset of single vertex trees. In the colored context we permit more general gradings $|\cdot|$ on $\OT_{\mathcal{C}}$. We allow the assignment of arbitrary positive integer weights $|c|\in \NN$ to the single vertex trees $\C\subset \OT_{\mathcal{C}}$, extended to $\OF_{\mathcal{C}}$ by $|\tau_1\ldots\tau_k| = |\tau_1|+\cdots+|\tau_k|$ and $|B_c^+\omega| = |\omega|+|c|$. The definitions of finite and infinite linear combinations of forests $\N_\C=k\langle\OT_{\mathcal{C}}\rangle$ and $\N_C^*=k\langle\langle\OT_{\mathcal{C}}\rangle\rangle$ are similar to the uni-color case.
\end{remark}

\begin{definition}\label{grafting}
The \emph{left grafting} product $\cdot \car \cdot: \N \otimes \N \rightarrow \N$ is defined recursively as follows: let $\tau \in \OT$ and $\omega, \omega_1, \omega_2 \in \OF$. Then 
\begin{eqnarray*}
\one\car\omega &=& \omega \\
\tau\car\one &=& 0 \\
\omega\car\ab & = & B^+(\omega),\\
\tau\car(\omega_1 \omega_2) &=& (\tau\car\omega_1)\omega_2 + \omega_1(\tau\car\omega_2) \\
(\tau\omega)\car\omega_1 &=&  \tau\car(\omega\car\omega_1) - (\tau\car\omega)\car\omega_1
\end{eqnarray*}
The product is extended to all of $\N$ and $\N^*$ by linearity and projective limits. 
\end{definition}
\noindent For example, 
\[\ab\ab\car\aabb = \aabababb + 2 \aabaabbb + \aaababbb\]
Note that grafting satisfies a Leibniz rule with respect to the concatenation product. If we define $\tau\bpr \omega = \tau\omega + \tau\car\omega$, we see that $\tau\car(\omega\car\omega_1) = (\tau\bpr \omega)\car\omega_1$. More generally, $\omega_1\car(\omega_2\car\omega) = (\omega_1\bpr \omega_2)\car\omega$, where $\bpr$ is the associative product defined as follows:

\begin{definition}\label{GL-product}
The Grossman-Larson product $\bpr: \N \otimes \N \rightarrow \N$ of $\omega_1, \omega_2 \in \OF$ is defined in terms of the grafting product as: 
\begin{equation*}
B^+(\omega_1 \bpr \omega_2) = \omega_1\car B^+(\omega_2),
\end{equation*}
 and is extended by linearity.
\end{definition}
It is clear that if we write $\omega_1[\omega_2]$ for $\omega_1 \car \omega_2$, we have the following structure on $\N$:
\begin{definition}[{\cite{munthe-kaas2008oth}}]\label{d-alg}
Let $A$ be a unital associative algebra with product $f,g \mapsto fg$, unit $\one$ and equipped with a non-associative composition $(.)[.]: A \otimes A \rightarrow A$ such that $\one[g] = g$ for all $g\in A$. Write $\mathcal{D}(A)$ for the set of all $f \in A$ such that $f[\cdot]$ is a derivation:
\[\mathcal{D}(A) = \{f\in A \,\,|\,\, f[gh] = (f[g])h + g(f[h]) \hspace{0.2cm} \text{for all } g,h \in A\}.\] Then $A$ is called a \emph{D-algebra} if for any derivation $f \in \mathcal{D}(A)$ and any $g\in A$ we have
\begin{eqnarray*}
&\text{(i)}& g[f] \in \mathcal{D}(A)\\
&\text{(ii)}& f[g[h]] = (fg)[h] + (f[g])[h].
\end{eqnarray*}
\end{definition}

\paragraph{The free D-algebra.}
We note that a morphism $\F: A \rightarrow A'$ of D-algebras is an algebra morphism satisfying $\F(D(\A)) \subset D(\A')$ and $\F(a[b]) = \F(a)[\F(b)]$ for all $a,b \in A.$ The D-algebra $\N$ plays a special role: it is a universal object. 

\begin{proposition}[{\cite{munthe-kaas2008oth}}]\label{universalD-alg}Let $\OT$ be planar trees decorated with colors $\C$. The vector space $\N = \RR\langle\OT\rangle$ is a free D-algebra over $\C$. That is, for any $D$-algebra $\A$ and any map $\nu\colon\C\rightarrow D(\A)$ there exists a unique D-algebra homomorphism $\F_\nu\colon N\rightarrow \A$ such that $\F_\nu(c)  = \nu(c)$ for all $c\in \C$.
\begin{diagram}[labelstyle=\scriptstyle]
\C &\rInto& \N \\
\dTo^{\nu} && \dTo_{\exists\,! \,\, \F_{\nu}} \\
D(\A) &\rInto& \A
\end{diagram}
\end{proposition}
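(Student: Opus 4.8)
The plan is to establish the universal property by exploiting that $\N$ is generated \emph{as a D-algebra} by the single-vertex trees $\C$. I would first record the structural fact that makes this precise: every tree is $B_c^+(\omega) = \omega\car c$ for a unique color $c\in\C$ and a unique forest $\omega$ with $|\omega|<|B_c^+\omega|$, and every forest is a unique concatenation of trees. Uniqueness of $\F_\nu$ is then immediate. If $\F$ is any D-algebra homomorphism with $\F(c)=\nu(c)$, then multiplicativity forces $\F(\tau_1\cdots\tau_k)=\F(\tau_1)\cdots\F(\tau_k)$, while compatibility with the composition forces $\F(B_c^+\omega)=\F(\omega\car c)=\F(\omega)[\F(c)]=\F(\omega)[\nu(c)]$; an induction on the order $|\cdot|$ shows that $\F$ is thereby determined on all of $\N$, so at most one such morphism exists.

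For existence I would read these same two identities as a recursive definition: put $\F_\nu(\one)=\one_\A$, set $\F_\nu(B_c^+\omega)=\F_\nu(\omega)[\nu(c)]$ on trees, and extend by $\F_\nu(\tau_1\cdots\tau_k)=\F_\nu(\tau_1)\cdots\F_\nu(\tau_k)$ to forests. Since $|\omega|<|B_c^+\omega|$ and the factorization of a forest into trees is unique, this recursion on $|\cdot|$ is unambiguous and well-founded. One point must be settled before anything else: that $\F_\nu$ carries trees into $D(\A)$. This follows from axiom (i) of Definition \ref{d-alg}, because $\nu(c)\in D(\A)$ is a derivation and hence $\F_\nu(\omega)[\nu(c)]=g[f]\in D(\A)$. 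Since $D(\N)$ is spanned by trees, this gives $\F_\nu(D(\N))\subset D(\A)$ and, more importantly for the rest of the argument, makes the derivation property of $\A$ available whenever the image of a tree is fed into the composition.

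It remains to verify that $\F_\nu$ is a morphism of D-algebras. Multiplicativity holds by construction and the image of $D(\N)$ has just been controlled, so the substance is the grafting identity $\F_\nu(a\car b)=\F_\nu(a)[\F_\nu(b)]$ for all $a,b\in\N$. I would prove it by induction along the five defining clauses of Definition \ref{grafting}, each of which is the $\F_\nu$-preimage of an identity valid in any D-algebra: $\one\car\omega=\omega$ matches $\one[g]=g$; $\tau\car\one=0$ matches $f[\one]=0$, which I would derive beforehand for $f\in D(\A)$ from the derivation property; $\omega\car\ab=B^+\omega$ is the defining equation of $\F_\nu$ on trees; the Leibniz clause matches the derivation property of $\F_\nu(\tau)$; and the final clause $(\tau\omega)\car\omega_1=\tau\car(\omega\car\omega_1)-(\tau\car\omega)\car\omega_1$ is exactly axiom (ii) rewritten as $(fg)[h]=f[g[h]]-(f[g])[h]$ with $f=\F_\nu(\tau)$, $g=\F_\nu(\omega)$, $h=\F_\nu(\omega_1)$.

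The step I expect to be the main obstacle is making this last induction well-founded. Grafting \emph{preserves} total order, $|a\car b|=|a|+|b|$, so $|\cdot|$ cannot serve as an induction parameter; moreover the fifth clause introduces a nested grafting $\omega\car\omega_1$ on its right-hand side. I would therefore induct lexicographically: primarily on the number of trees in the left factor $a$, and, for a single-tree left factor, secondarily on the order of the right factor $b$. The fifth clause strictly lowers the primary parameter (the outer graftings $\tau\car(\cdots)$ and $(\tau\car\omega)\car\omega_1$ have a single-tree or shorter left factor, using that $\tau\car\omega$ has the same length as $\omega$), while the single-tree case is closed by the Leibniz clause and by axiom (ii) together with the Grossman--Larson relation $\omega\car B^+\omega'=B^+(\omega\bpr\omega')$, which expresses grafting onto a tree through grafting onto its strictly smaller branches. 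Confirming that this measure strictly decreases in every clause, so that every inductive hypothesis invoked is legitimate, is the delicate bookkeeping to which I would devote the bulk of the write-up.
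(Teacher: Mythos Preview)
The paper does not give its own proof of this proposition; it is quoted from \cite{munthe-kaas2008oth}. Your argument is the natural one and is essentially correct, with one slip to repair.

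You assert that ``$D(\N)$ is spanned by trees'' and use this to conclude $\F_\nu(D(\N))\subset D(\A)$. That is not accurate: the paper itself later identifies $D(\N)$ with the Lie polynomials $\g(\Hn)\cap\N$ (see the opening of Section~\ref{substlov}), which properly contains the span of $\OT$; for instance $[\ab,\aabb]=\ab\,\aabb-\aabb\,\ab$ lies in $D(\N)$ but is a forest of word-length two. The repair is immediate once you have already shown that trees map to $D(\A)$ and that $\F_\nu$ respects concatenation: it then suffices to check that in any D-algebra the commutator $fg-gf$ of two derivations $f,g\in D(\A)$ is again a derivation. This follows from axioms (i) and (ii) of Definition~\ref{d-alg} by the usual computation (the cross terms $(g[h])(f[k])+(f[h])(g[k])$ arising in $(fg)[hk]$ and $(gf)[hk]$ cancel); the paper itself invokes exactly this fact in the proof of Lemma~\ref{substBserieslemma}. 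With this in hand your containment $\F_\nu(D(\N))\subset D(\A)$ is restored.

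The rest of your plan---the lexicographic induction on the word-length of the left grafting argument with a secondary induction on the order of the right argument, reducing via clause~5 for the primary parameter, clauses~2--4 for a single-tree left factor, and the Grossman--Larson relation $\tau\car B^+(\omega')=B^+(\tau\omega')+B^+(\tau\car\omega')$ to handle grafting of a tree onto a tree---is sound and covers all cases.
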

We will see that based on this result we can construct elementary differentials and Lie--Butcher series for Lie group integrators, and also define the substitution law. To achieve this we utilize D-algebra structure of differential operators on manifolds \cite{munthe-kaas2008oth}.

\paragraph{The D-algebra of differential operators.}\label{Dalg_diffop} There is a D-algebra based on the space of vector fields\footnote{The vector fields are interpreted as differential operators acting on functions.} on the manifold $\M$. Consider the space $\mathcal{C}^{\infty} (M, \g) =: \g^{\M},$ where $\g\subset \XM$ is a Lie sub-algebra of the set of all vector fields on $\M$. For $\Psi \in \g^{\M}$ and $V\in \g$, the \emph{Lie derivative} $V[\Psi] \in \g^{\M}$ of  $\Psi$ along $V$ defined by 
\begin{equation}
V[\Psi](p) := \frac{d}{dt}\left| \right._{t=0} \Psi(\exp(tV)(p)) .
\end{equation}
$V[\cdot]$ is a first order differential operator on $\g^\M$, satisfying $V[h\Psi] = V[h]\Psi+hV[\Psi]$, where $h\in\FM$ is a scalar function\footnote{This is true when $\g^\M$ is replaced by  $\Xi^\M$ for any vector space $\Xi$.}. The Lie derivative gives rise to differential operators of higher degrees through concatenation: the concatenation of $V,W \in \g$ is a second-degree differential operator defined by $VW[\Psi] := V[W[\Psi]].$ The $\FM$-module of all differential operators, including the ones of higher degree, and the degree zero operator spanned by the identity operator $\one$, is called the \emph{universal enveloping algebra} $U(\g)$ of $\g$. We extend the structure to the space  $\mathcal{C}^{\infty} (\M, U(\g)) =: U(\g)^{\M}$ as follows: for $f,g \in U(\g)^{\M}$, $f[g] \in U(\g)^{\M}$ is defined by 
\begin{equation}\label{eq:vf_comp}
f[g](p) := (f(p)[g])(p)
\end{equation}
 and $fg \in U(\g)^{\M}$ is defined as 
 \begin{equation}\label{eq:vf_frozencomp}
fg(p) := f(p)g(p).
 \end{equation}
The latter operation is called the \emph{frozen composition} of $f$ and $g$. 
For two vector fields $f$ and $g$ written in terms of the standard coordinate frame $\{\partial / \partial x_i\}$, the operations take the following form: 
\begin{eqnarray}
f[g] &=& \sum_{i,j} f_j \frac{\partial g_i}{\partial x_j}\frac{\partial}{\partial x_i}\\
fg &=& \sum_{i,j} f_ig_j\frac{\partial}{\partial x_i}\frac{\partial}{\partial x_j}
\end{eqnarray}

The operations (\ref{eq:vf_comp}) and (\ref{eq:vf_frozencomp}) endows the space $U(\g)^{\M}$ with the structure of a $D$-algebra, where the derivations are the vector fields in $\g^\M$:
\begin{lemma}
Let $f \in \g^{\M}$ and $g, h \in U(\g)^{\M}$. Then
\begin{eqnarray*}
f[gh] &=& f[g]h + g(f[h]) \\
f[g[h]] &=& (fg)[h] + f[g][h].
\end{eqnarray*}
Hence $U(\g)^{\M}$ is a D-algebra.
\end{lemma}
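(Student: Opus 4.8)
The plan is to unwind the two definitions (\ref{eq:vf_comp}) and (\ref{eq:vf_frozencomp}) pointwise and reduce everything to the ordinary product rule of differentiation along a flow, using crucially that $f\in\g^{\M}$ means $f(p)$ is a genuine vector field, i.e.\ a first-order operator, for each $p\in\M$. Throughout I would fix $p$, set $V=f(p)\in\g$, and write $\varphi_t=\exp(tV)$ for its flow, so that the Lie derivative reads $(V[\Phi])(p)=\frac{d}{dt}\big|_{t=0}\Phi(\varphi_t(p))$.

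For the first identity, by definition $f[gh](p)=(V[gh])(p)=\frac{d}{dt}\big|_{t=0}(gh)(\varphi_t(p))$. Since the frozen product $gh$ is pointwise multiplication in $U(\g)$, the integrand factors as $g(\varphi_t(p))\,h(\varphi_t(p))$, and a single application of the Leibniz rule for the $t$-derivative of a product splits it into $(V[g])(p)\,h(p)+g(p)\,(V[h])(p)$. Re-reading the two pieces through the definitions gives $f[gh]=f[g]h+g(f[h])$. The only place the hypothesis $f\in\g^{\M}$ enters is that one application of the product rule suffices; a higher-order operator would not be a derivation.

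For the second identity the key observation is that $g[h]$ is the \emph{diagonal} of a function of two arguments. Writing $F(q_1,q_2)=(g(q_1)[h])(q_2)$ we have $(g[h])(q)=F(q,q)$, so by the chain rule for the diagonal,
\[
f[g[h]](p)=\frac{d}{dt}\Big|_{t=0}F(\varphi_t(p),\varphi_t(p))
=\frac{d}{dt}\Big|_{t=0}F(\varphi_t(p),p)+\frac{d}{dt}\Big|_{t=0}F(p,\varphi_t(p)).
\]
In the first term the evaluation point is frozen at $p$ while the operator-valued coefficient $g(\varphi_t(p))$ varies; since the action of $U(\g)$ on functions is linear in the operator slot, the $t$-derivative passes onto that slot, yielding $((V[g](p))[h])(p)$, which is exactly $f[g][h](p)$ after identifying $V[g](p)=f[g](p)$. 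In the second term the operator $g(p)$ is frozen and only the base point moves, so it equals $V[g(p)[h]](p)$; the composition law for operators in $U(\g)$, namely $V[W[h]]=(VW)[h]$, rewrites this as $(Vg(p))[h](p)=(fg)[h](p)$. Adding the two pieces yields (ii).

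I expect the main obstacle to be the careful bookkeeping in (ii): correctly separating the two distinct $t$-dependencies (through the coefficient $g(\varphi_t(p))$ and through the base point $\varphi_t(p)$), and justifying both that the derivative may be moved onto the operator slot and that the composition $V[g(p)[\cdot]]$ is the action of the enveloping-algebra product $Vg(p)$ read as operator composition. These are precisely the structural facts that single out the Lie derivative among first-order operators and the $U(\g)$-product as operator composition; once they are in place the computation is routine. The final claim then follows by observing that (i) exhibits $\g^{\M}$ as derivations and that (i)--(ii) are exactly conditions (i)--(ii) of Definition \ref{d-alg}, together with the trivial $\one[g]=g$.
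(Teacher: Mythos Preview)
The paper states this lemma without proof, so there is nothing to compare against; your argument is therefore the only one on the table, and the core of it is sound. Your pointwise unwinding for the Leibniz identity is correct, and the ``diagonal'' trick for the second identity---writing $(g[h])(q)=F(q,q)$ with $F(q_1,q_2)=(g(q_1)[h])(q_2)$ and splitting the $t$-derivative into the two partial variations---is exactly the right mechanism. The identification of the second partial term with $(fg)[h](p)$ via $V[\,g(p)[h]\,]=(Vg(p))[h]$ is precisely how the $U(\g)$-action is defined (concatenation is iterated Lie differentiation), so that step is justified.

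One small gap in the closing sentence: you write that ``(i)--(ii) are exactly conditions (i)--(ii) of Definition~\ref{d-alg}'', but that is not quite right. The first displayed identity in the lemma is the Leibniz rule, which shows $\g^{\M}\subset\mathcal{D}(U(\g)^{\M})$; it is \emph{not} condition~(i) of Definition~\ref{d-alg}. Condition~(i) there demands that $g[f]\in\mathcal{D}(A)$ for every $g\in A$ and every derivation $f$, i.e.\ in the present setting that $g[f]\in\g^{\M}$ whenever $f\in\g^{\M}$ and $g\in U(\g)^{\M}$. This is true---$g(p)$ acts as a differential operator on the $\g$-valued function $f$, and the output is still $\g$-valued---but you have not said it. Adding that one line (and, if you want to be thorough, the observation that $\mathcal{D}(U(\g)^{\M})=\g^{\M}$, which the paper asserts just before the lemma) would complete the ``Hence $U(\g)^{\M}$ is a D-algebra'' claim.
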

\noindent The composition of $f$ and $g$ as differential operators, defined by $(f \bpr g)[h] := f[g[h]]$, is called \emph{non-frozen composition}.\footnote{We note that the two operations $f,g \mapsto f[g]$ and $f,g \mapsto f \bpr g$ gives $U(\g)^\M$ the structure of a unital dipterous algebra (as defined in \cite{loday2010cha}).}

\paragraph{Post-Lie algebras.} The theory of Lie--Butcher series can be reformulated in terms of \emph{post-Lie algebras}. These were first studied in the setting of \emph{operads} by Vallette \cite{vallette2007hog}, and also by the authors in \cite{munthe-kaas2012opl}. Our main motivation for the construction of post-Lie algebras was their relation to the D-algebras defined above, which are universal enveloping algebras of post-Lie algebras.

\subsection{Lie--Butcher series}

\paragraph{Classical B-series.} Recall (see e.g. \cite{hairer2006gni}) that a B-series is a (formal) series indexed over the set $\NT$ of \emph{non-planar} rooted trees (i.e.\ trees without any ordering of the branches) and can for a vector field $f$ be written as 
\begin{equation}\label{eq:bserclassic}
B_{h,f}(a)(y) = a(\one)y + \sum_{\tau \in \NT} \frac{h^{|\tau|}}{\sigma(\tau)} a(\tau) \F_f(\tau)(y).
\end{equation}
Here $\sigma(\tau)$ is the symmetry factor for $\tau \in \NT$, and $a$ is a map $a: \NT \rightarrow \mathbb{R}$. The map $\F_f(\tau): \mathbb{R}^n \rightarrow \mathbb{R}^n$ is the \emph{elementary differential} of the tree $\tau$, obtained recursively from $f$ and its derivatives:
\begin{equation}\label{eg:elmdiffB} \F_f(\ab)(y) = f(y), \hspace{1cm} \F_f(\tau)(y) = f^{(m)}(y)(\F_f(\tau_1)(y), ..., \F_f(\tau_m)(y)),\end{equation} where $\tau = B^+(\tau_1, \dots \tau_m)$ and $f^{(m)}$ is the $m$th derivative of the vector field. The parameter $h$ represents the step-size of the numerical method giving rise to the B-series.

\paragraph{LB-series.} We will consider a more general setting: that of differential equations evolving on manifolds. Let $\M$ be a manifold and $\XM$ the Lie algebra of vector fields $F: \M \rightarrow T\M$ on $\M$. The fundamental assumption for numerical Lie group integrators is the existence of a \emph{frame} on $TM$, defined as a finite number of vector fields $\{E_1, E_2, \dots, E_m\}$ spanning the tangent space $T_p\M$ at each point $p \in \M$. The frame is allowed to be overdetermined. It generates a Lie algebra $\g$, and it is assumed that flows of vector fields in $\g$ can be computed exactly \cite{munthe-kaas1995lbt, owren1999rkm}.  Any vector field $F: \M \rightarrow T\M$ can be written as $F(p) = \sum a_i E_i(p)$. We will study vector fields of the form $F(p) = \sum f_i(p) E_i(p)$ where $f_i: \M \rightarrow \mathbb{R}$ are smooth functions. Given such a vector field, let $f\in \g^\M$ be defined as $f_p = \sum f_i(p) E_i$. We say that $f_p$ has coefficients frozen relative to the frame. In other words, to each such $F\in \XM$ there is an associated $f\in \g^\M$ so that $F(p) = f_p\cdot p$, where $f_p\cdot p$ denotes evaluation of $f_p$ in $p$. We will often refer to such $f \in \g^\M$ as vector fields.

The general differential equation (\ref{diffeq}) can now be written as 
\begin{equation}\label{Lie-diffeq}
y' = f_y\cdot y, \hspace{0.5cm} \text{where } f\in \g^\M.
\end{equation}
The Lie--Butcher series are expansions over $\OT$ associated to integrators of this equation, just as B-series are associated to differential equations expressing the flow of vector fields in $\mathbb{R}^n$. The non-commutativity of combining vector fields is reflected in the planarity of the trees in $\OT$. 

Now we can construct the elementary differentials needed to define Lie--Butcher series. As in the classical case they can be expressed recursively by a function $\F$ based on trees. 

\begin{definition}\label{elementdiff}
The elementary differentials associated to a vector field $f: M \rightarrow \g$ is the D-algebra morphism $\F_f: \N \rightarrow U(\g)^{\M}$ we get from Proposition \ref{universalD-alg} by associating the tree $\ab$ to $f$ (i.e.\ $\C = \{\ab\}$ and $\nu: \ab \mapsto f$ in Proposition \ref{universalD-alg}). Hence $\F_f$ is defined by 
\begin{itemize}
\item[(i)] $\F_f(\mathbb{I}) = \mathbb{I}$
\item[(ii)] $\F_f(B^+(\omega)) = \F_f(\omega)[f]$
\item[(iii)] $\F_f(\omega_1 \omega_2) = \F_f(\omega_1)\F_f(\omega_2)$
\end{itemize}
\end{definition}
\noindent When the vector field $f$ is clear from the context we will occasionally write $\F$ instead of $\F_f$. 
\begin{definition}~\label{lbseries}
For an infinite series $\alpha\in \N^*=\RR\langle\langle\OT\rangle\rangle$ a \emph{Lie--Butcher series} is a formal series in $U(\g)^\M$ defined as\[\Bs_{f}(\alpha) = \sum_{\omega\in\OF}\alpha(\omega) \F(\omega). \]
\end{definition}
\noindent For a vector field $f$ this can also be written as the commutative diagram
\begin{diagram}[labelstyle=\scriptstyle]
\{\ab\} &\rInto& \N^* \\
\dTo^f && \dTo_{\Bs_f} \\
\g^M &\rInto& U(\g)^M
\end{diagram}
where $\Bs_f$ is the unique D-algebra homomorphism given by Proposition \ref{universalD-alg}.

\begin{remark}
By coloring the vertices of the trees vi a map $\nu$ we can define $\mathcal{F}$ and $\Bs$ for multiple vector fields. The elementary differentials $\mathcal{F}_{\nu}$ are still obtained from Proposition \ref{universalD-alg}, but the set $\mathcal{C}$ will contain multiple colors.
\end{remark}

\subsection{Some algebraic constructions}\label{sect:algebra}

Before we show how LB-series can be used to represent flows of vector fields on manifolds we must conduct a closer study of the space where the coefficients $\alpha$ live. To understand the various ways we can represent such flows it will also be helpful to look at some Lie idempotents, namely the eulerian and Dynkin idempotents (Section \ref{lieidem}), and also certain non-commutative polynomials called Bell polynomials (Section \ref{bell}). We will follow the presentation in \cite{munthe-kaas2008oth} and \cite{lundervold2009hao}.

\subsubsection{The Hopf algebras $\mathcal{H}_{Sh}$ and $\Hn$}\label{hopfalgebras}

It is well known that inserting a B-series $B_{h,f}(a)$ into another series $B_{h,f}(b)$ results in a B-series $B_{h,f}(a)(B_{h,f}(b)(y)) = B_{h,f}(a\cdot b)(y)$. The product $a\cdot b$ on the set of maps $a: \OT \rightarrow \RR$ with $a(\one)=1$ gives rise to a group, called the \emph{Butcher group} \cite{butcher1972aat, hairer1974otb}. This is the group of characters in a variant of the Connes--Kreimer Hopf algebra of renormalization \cite{connes1998har, brouder2000rkm}. A similar result holds for LB-series, where the Hopf algebra of Connes--Kreimer is replaced by a more general Hopf algebraic structure on the set of rooted trees. This Hopf algebra was introduced in \cite{munthe-kaas2008oth}. See also \cite{lundervold2009hao, lundervold2011oas}. 

Note first that the vector space $\RN$ spanned by trees can be turned into a Hopf algebra by using concatenation as product and \emph{deshuffling} as coproduct. The deshuffling coproduct $\Delta_{Sh}$ is results from requiring the trees to be primitive, and extending by concatenation: 
\[\Delta_{Sh}(\tau) = \tau \otimes \one + \one \otimes \tau, \hspace{1cm} \Delta_{Sh}(\tau_1\tau_2) = \Delta_{Sh}(\tau_1)\Delta_{Sh}(\tau_2),\]
where $\tau$, $\tau_1$, $\tau_2$ are trees. The antipode is defined by 
\[S(\tau_1\tau_2\cdots \tau_n) = (-1)^n\tau_n\tau_{n-1}\cdots \tau_1,\]
and the unit $\eta$ and counit $\epsilon$ by $\eta(1) = \one$, and $\epsilon(\one) = 1$, $\epsilon(\omega) = 0$, for all $\omega \in \OF \setminus \one$.

The vector space $\N = \RN$ can also be turned into an algebra using the \emph{shuffle product} $\shuffle: \N \otimes \N \rightarrow \N$ defined recursively by $\one \shuffle \omega = \omega = \omega \shuffle \one$ and 
\begin{equation}
(\tau_1\omega_1) \shuffle (\tau_2 \omega_2) = \tau_1(\omega_1 \shuffle \tau_2 \omega_2) + \tau_2(\tau_1\omega_1 \shuffle \omega_2)
\end{equation}
for $\tau_1, \tau_2 \in \OT$, $\omega_1, \omega_2 \in
\OF$.\footnote{Coproducts will occasionally be written using the
  \emph{Sweedler} notation $\Delta(\omega)=\sum \omega_{(1)} \otimes
  \omega_{(2)}$.} This algebra can be given the structure of a bialgebra in several different ways. We can equip it with the coproduct $\Delta_c: \N \rightarrow \N \otimes \N$ given by \emph{deconcatenation} of words: 
\begin{equation}
\Delta_c(w) = \one \otimes w + w \otimes \one + \sum_{i=1}^{n-1} \tau_1 \cdots \tau_i \otimes \tau_ {i+1} \cdots \tau_n,
\end{equation}
where $\omega = \tau_1 \cdots \tau_n$. This results in the shuffle bialgebra, which equipped with the same antipode, unit and counit as the deshuffle Hopf algebra defines the \emph{shuffle Hopf algebra} $\mathcal{H}_{Sh}.$\footnote{Note that the concatenation deshuffling Hopf algebra is dual to the shuffle deconcatenation Hopf algebra.} 

If we instead equip $\N$ with the coproduct $\Delta_N: \N \rightarrow \N \otimes \N$ defined recursively as $\Delta_N(\one) = \one \otimes \one$ and 
\begin{equation}
\Delta_N(\omega \tau) = \omega \tau \otimes \one + \Delta_N(\omega) \shuffle \, \cdot \, (I \otimes B^+) \Delta_N(B^-(\tau)),
\end{equation}
 where $\tau \in \OT$, $\omega \in \OF$, we get another bialgebra
 $\Hn$. Here $\shuffle \,\cdot: \N^{\otimes 4} \rightarrow \N \otimes
 \N$ denotes shuffle on the left and concatenation on the right:
 $(\omega_1 \otimes \omega_2) \shuffle \, \cdot\, (\omega_3 \otimes
 \omega_4) = (\omega_1 \shuffle \omega_3) \otimes (\omega_2
 \omega_4).$ An explicit description of the coproduct in terms of tree
 cuts can be found in Section \ref{substlov} below, and in
 \cite{munthe-kaas2008oth}, where it was shown that $\Delta_N$ is the
 dual of the Grossman-Larson product and that $\Hn$ forms a Hopf algebra.\footnote{Being a graded and connected bialgebra $\Hn$ is automatically a Hopf algebra. A more direct argument, and formulas for the antipode, can be found in \cite{munthe-kaas2008oth}} This is the Hopf algebra governing composition of LB-series (Theorem \ref{compositionOfLB}).

To simplify the expressions we introduce a \emph{magmatic} structure (i.e. the structure of a set equipped with a closed binary operation with no further relations) on $\OF$. Additional details and motivation for the introduction of this structure can be found in Section \ref{implement}. Let $\omega_1$, $\omega_2$ be two elements of $\OF$ and define the operation $\times: \OF \times \OF \rightarrow \OF$ by 
\begin{equation}
\omega_1 \times \omega_2 = \omega_1 B^+(\omega_2).
\end{equation}
For example, 
\[\aababb \times \ab\aabb =  \aababb\,\,\aabaabbb\]
This operation is magmatic, and the empty tree $\one$ freely generates all of $\OF$. The operation is extended to $\N = \RN$ via linearity. If $\omega = v_1 \times v_2$, $\omega \neq \one$, then we call $v_1$ the \emph{left part} $\omega_L$ of $\omega$ and $v_2$ the \emph{right part} $\omega_R$. The shuffle of two elements of the magma can be defined as 
\begin{equation}
v \shuffle \omega = (v \shuffle  \omega_L)\times v_R + (v_L \shuffle \omega) \times \omega_R,
\end{equation}
$\omega\shuffle \one = \one \shuffle \omega = \omega$, and we notice that the coproduct in $\Hn$ can be written as  
\begin{equation}
\Delta_N(\omega) = \omega \otimes \one + \Delta_N(\omega_L) \shuffle \times \Delta_N(\omega_R),
\end{equation}
where $\shuffle \times$ now denotes shuffle on the left, magma operation on the right.

\paragraph{Characters and the composition of LB-series.} Recall that a \emph{character} of a Hopf algebra $(H, \Delta, \cdot)$ over a field $\k$ is an algebra morphism $\alpha: H \rightarrow \k$, e.g. $\alpha(a\cdot b) = \alpha(a)\alpha(b),$ and $\alpha(1_H) = 1$, where $a,b \in H$, and $1_H$ denotes the unit. The convolution product $\alpha * \beta$ of two characters is defined by \[H \overset{\Delta}{\longrightarrow} H \otimes H \overset{\alpha \otimes \beta}{\longrightarrow} \k \otimes \k \rightarrow \k.\] This gives the set of characters $G(H)$ of $H$ the structure of a group. In fact, the field $\k$ can be replaced by any commutative algebra $A$, giving rise to \emph{$A$-valued characters}. Another type of character we will need later are the \emph{infinitesimal characters}. An $A$-valued infinitesimal character is a linear map $\alpha: H \rightarrow A$ satisfying  
\begin{equation}
\alpha(h\cdot h') = \mu_A(\alpha(h), \delta(h')) + \mu_A(\delta(h), \alpha(h')),
\end{equation}
where $\mu_A$ is the product in $A$ and $\delta$ is the composition of the counit of $H$ and the unit of $A$, $\delta = \eta_A \circ \epsilon$. The characters and infinitesimal characters are connected via the exponential and logarithm, see e.g. \cite{manchon2008hai}. 

The group structure of the characters in $\Hn$ exactly corresponds to the composition of LB-series.

\begin{theorem}[{\cite{munthe-kaas2008oth}}]\label{compositionOfLB} The composition of two LB-series is again a LB-series: 
\[\Bs_f(\alpha)[\Bs_f(\beta)] = \Bs_f(\alpha * \beta),\] 
where $*$ is the convolution product in $\Hn$.
\end{theorem}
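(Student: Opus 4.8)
The plan is to deduce the identity from two structural facts: that $\Bs_f$ is a homomorphism of D-algebras, and that the coproduct $\Delta_N$ of $\Hn$ is dual to the Grossman--Larson product $\bpr$. First I would record that the composition on the left-hand side is the non-frozen composition of the two series viewed as differential operators, i.e. the operation $\bpr$ on $U(\g)^\M$ characterised by $(F\bpr G)[H] = F[G[H]]$; this is the operation admitting $\one$ as a two-sided unit, and hence the one that models composition of flows (ordinary grafting $[\cdot]$ fails to have $\one$ as a right unit). By Proposition \ref{universalD-alg} and Definition \ref{elementdiff}, $\F_f$ is the unique D-algebra morphism $\N \to U(\g)^\M$ sending $\ab$ to $f$, and $\Bs_f$ is its extension to $\N^* = \underleftarrow{\lim}\,\N_k$ through the grading.

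The key step is to show that $\Bs_f$ intertwines the Grossman--Larson products on $\N^*$ and on $U(\g)^\M$, that is $\Bs_f(a\bpr b) = \Bs_f(a)\bpr \Bs_f(b)$. Since $\bpr$ on $\N$ is built from grafting through the identity $\omega_1\car(\omega_2\car\omega) = (\omega_1\bpr\omega_2)\car\omega$ noted after Definition \ref{GL-product}, and the analogous identity holds in $U(\g)^\M$ by the very definition of non-frozen composition, I would apply the morphism property $\F_f(x\car y) = \F_f(x)[\F_f(y)]$ (grafting corresponds to the D-algebra bracket $[\cdot]$) together with $\F_f(B^+(\omega)) = \F_f(\omega)[f]$ to transport the defining relation of $\bpr$ from $\N$ to $U(\g)^\M$. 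Concretely, $\F_f(\omega_1\bpr\omega_2)$ and $\F_f(\omega_1)\bpr\F_f(\omega_2)$ are shown to coincide as operators, and passing to the projective limit extends this to all of $\N^*$.

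With this in hand, I would expand $\Bs_f(\alpha)\bpr\Bs_f(\beta)$ bilinearly as $\sum_{\omega_1,\omega_2}\alpha(\omega_1)\beta(\omega_2)\F_f(\omega_1\bpr\omega_2)$ and then read off coefficients. Pairing the series $\sum_{\omega_1,\omega_2}\alpha(\omega_1)\beta(\omega_2)(\omega_1\bpr\omega_2)$ against a forest $\omega\in\OF$ and using that $\Delta_N$ is dual to $\bpr$, the coefficient of $\omega$ is exactly $(\alpha\otimes\beta)(\Delta_N\omega) = (\alpha*\beta)(\omega)$. Hence the Grossman--Larson product of the two series in $\N^*$ coincides with the series of $\alpha*\beta$, and applying $\Bs_f$ yields $\Bs_f(\alpha)\bpr\Bs_f(\beta) = \Bs_f(\alpha*\beta)$, which is the claim. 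The grading guarantees that every coefficient is a finite sum, so these formal manipulations are justified termwise in each $\N_k$.

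The main obstacle I expect is the middle step: verifying cleanly that a morphism preserving only the grafting bracket and concatenation automatically preserves the derived associative product $\bpr$. The delicate point is that the operation ``apply $[f]$'' (equivalently $B^+$) used to define $\bpr$ is injective on $\N$ but need not be injective on $U(\g)^\M$, so one cannot simply cancel it; instead one should verify the identity by testing both operators on the image of $\F_f$ and invoking the associativity relation for $\car$, or argue at the level of the free object $\N$ and transport along $\Bs_f$. Keeping the duality bookkeeping between $\Hn$-convolution and the Grossman--Larson product consistent, including the placement of the unit $\one$, is the other place where care is needed.
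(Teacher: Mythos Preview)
The paper does not prove this theorem here; it is quoted from \cite{munthe-kaas2008oth} without argument, so there is no in-paper proof to compare against. Your outline is essentially the standard proof from that reference: identify composition of LB-series with the Grossman--Larson product $\bpr$ on $U(\g)^\M$, observe that the D-algebra morphism $\Bs_f$ preserves $\bpr$ because $\bpr$ is expressed through grafting and $B^+$ (both preserved by any D-algebra morphism), and then use that $\Delta_N$ is dual to $\bpr$ so that convolution in $\Hn$ is precisely the Grossman--Larson product on the coefficient series.

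Your two flagged worries are the right ones to raise. The reading of the left-hand side as non-frozen composition $\bpr$ rather than the bare D-algebra bracket $[\cdot]$ is indeed the intended interpretation: the theorem is about composing pullback operators $\Psi\mapsto\Bs_f(\alpha)[\Bs_f(\beta)[\Psi]]$, and this is $(\Bs_f(\alpha)\bpr\Bs_f(\beta))[\Psi]$ by definition of $\bpr$. As for showing that $\F_f$ preserves $\bpr$ without cancelling a non-injective $B^+$ on the target, the cleanest route is exactly the one you sketch at the end: prove $\omega_1\bpr\omega_2$ is determined in $\N$ by the relation $(\omega_1\bpr\omega_2)\car\omega = \omega_1\car(\omega_2\car\omega)$ for all $\omega$, apply $\F_f$ to both sides using the morphism property for $\car$, and conclude that $\F_f(\omega_1\bpr\omega_2)$ and $\F_f(\omega_1)\bpr\F_f(\omega_2)$ act identically as operators on $U(\g)^\M$, hence coincide. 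So your proposal is correct and coincides with the argument in the cited source.
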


\subsubsection{Lie idempotents}\label{lieidem}
A \emph{Lie polynomial} over an algebra $A$ is an element of the smallest submodule of $\RR\langle A \rangle$ that is closed under the bracket $[P,Q] := PQ-QP$ in $\RR\langle A \rangle$. The Lie algebra of these polynomials is the free Lie algebra $\Lie(A)$ on $A$ \cite{reutenauer93fla}. There are several important idempotent maps, called \emph{Lie idempotents}, from $\RR\langle A \rangle$ to $\Lie(A)$.

\paragraph{Eulerian idempotent}
Let $H$ be a commutative, connected and graded Hopf algebra. Consider $\End_k(H) = \Hom_k(H,H)$ equipped with the convolution product $\ast$. Let $\id\in\End_{k}(H)$ be the identity endomorphism and $\delta = \eta\circ \epsilon\in\End_{k}(H)$ the unit of convolution.

\begin{definition}[{\cite{loday97ch}}]
The Eulerian idempotent $e\in \End(H)$ is given by the formal power series \[e := \log^*(\Id) = J - \frac{J^{*2}}{2} + \frac{J^{*3}}{3} + \cdots (-1)^{i+1} \frac{J^{*i}}{i} + \cdots,\] where $J=\Id-\delta.$
\end{definition}

\begin{proposition}[{\cite{loday97ch}}]
For any commutative graded Hopf algebra $H$, the element $e \in \End_k(H)$ defined above is a Lie idempotent in $H$. That is, $e \circ e = e$ and it has image in the free Lie algebra.
\end{proposition}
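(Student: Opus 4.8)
The plan is to realise $e$ as the degree-one member of a family of orthogonal ``higher Eulerian idempotents'' and to deduce $e\circ e=e$ from the composition law of the Adams operations inside the convolution algebra $(\End_k(H),\ast)$; the statement about the image I will obtain from the observation that $\Id$ is a coalgebra endomorphism, so that its convolution logarithm is forced to take primitive values. Throughout, the symmetry of $H$ is what makes the argument run, and the concrete Hopf algebra in play is the (cocommutative) concatenation--deshuffle tree algebra $\RN$, whose primitives are exactly the free Lie algebra $\Lie(\OT)$.

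First I would record that, since $H$ is connected and graded, $J=\Id-\delta$ is locally nilpotent: $J^{\ast m}$ annihilates every homogeneous component of degree $<m$. Hence $\exp^\ast$ and $\log^\ast$ are mutually inverse and everywhere defined, so $e=\log^\ast(\Id)$ makes sense. Set $e^{(k)}:=\frac{1}{k!}e^{\ast k}$ (so $e^{(0)}=\delta$ and $e^{(1)}=e$) and define the Adams operations $\Phi_n:=\Id^{\ast n}$. Because $\Id=\exp^\ast(e)$ and an element commutes with itself under $\ast$, we get $\Phi_n=\exp^\ast(ne)=\sum_{k\ge 0}n^k e^{(k)}$, which on each homogeneous component is a finite sum and hence a genuine polynomial in $n$ with the operators $e^{(k)}$ as coefficients.

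The engine of the idempotency proof is the Adams composition law
\[ \Phi_n\circ\Phi_m=\Phi_{nm}, \]
which follows from coassociativity together with the (co)commutativity of $H$ by regrouping the iterated (co)products. Substituting the polynomial expression for $\Phi$ into both sides gives $\sum_j (nm)^j e^{(j)}=\sum_{k,l}n^k m^l\,(e^{(k)}\circ e^{(l)})$, and comparing coefficients of $n^k m^l$ --- legitimate because the identity holds on each finite-dimensional homogeneous piece for all integers $n,m$, hence as a polynomial identity --- yields the orthogonality relations $e^{(k)}\circ e^{(l)}=\delta_{kl}\,e^{(k)}$. Taking $k=l=1$ gives $e\circ e=e^{(1)}\circ e^{(1)}=e^{(1)}=e$.

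For the image, I would exploit that $\Id$ is a coalgebra endomorphism (grouplike for the convolution structure) and that postcomposition with the coproduct is an algebra morphism $(\End_k(H),\ast)\to(\Hom_k(H,H\otimes H),\ast)$, so it intertwines $\log^\ast$. Splitting the logarithm of the grouplike $\Id$ into its two commuting ``insertion'' pieces produces $\Delta\circ e=(e\otimes\delta+\delta\otimes e)\circ\Delta$, i.e. $\Delta(e(x))=e(x)\otimes\one+\one\otimes e(x)$ for every $x$; thus $e$ lands in $\Prim(H)$. This splitting is exactly where cocommutativity enters, and for the tree algebra $\RN$ the primitives are the free Lie algebra $\Lie(\OT)$, so $e$ projects onto the free Lie algebra. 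The step I expect to be the main obstacle is the composition law $\Phi_n\circ\Phi_m=\Phi_{nm}$ together with the passage from this generating-function identity to coefficientwise orthogonality: this is the point that genuinely uses the (co)commutativity hypothesis and must be organised degree by degree so that every sum stays finite; the remainder is bookkeeping in $(\End_k(H),\ast)$.
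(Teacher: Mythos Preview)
Your argument is correct and is essentially the standard proof due to Loday and Patras via Adams operations; this is exactly what the paper is citing. The paper does not supply its own proof of this proposition but defers entirely to \cite{loday97ch}, so there is nothing internal to compare against beyond the literature you are reproducing.

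One remark on hypotheses is worth recording. Your derivation of $e\circ e=e$ via $\Phi_n\circ\Phi_m=\Phi_{nm}$ goes through under either commutativity or cocommutativity, as you say. Your argument for the image, however, genuinely needs cocommutativity (the commutation of the two insertions $\iota_1,\iota_2$ in $\Hom(H,H\otimes H)$), and you are right to point to the concatenation--deshuffle algebra $\N$ as the cocommutative object where ``image in the free Lie algebra'' is literally $\operatorname{im}(e)\subset\Prim(\N)=\Lie(\OT)$. The proposition as printed says ``commutative'', which sits awkwardly with that conclusion: for a commutative but non-cocommutative graded connected Hopf algebra such as $\Hn$ or $\mathcal{H}_{Sh}$, the Eulerian idempotent does \emph{not} land in the primitives (already $e(ab)=\tfrac12(ab-ba)$ in the two-letter shuffle algebra is not primitive for deconcatenation). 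What holds instead is the dual statement: $e$ annihilates products of augmentation-ideal elements, i.e.\ $e(\omega_1\shuffle\omega_2)=0$. That is precisely what Proposition~\ref{eulerlog} uses, since $\alpha\circ e$ being an infinitesimal character amounts to $e$ vanishing on shuffles. Your proof covers the cocommutative case directly; the commutative case follows by the obvious dualisation (precompose with $\mu$ instead of postcomposing with $\Delta$), and it would be worth saying so in one line.
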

\noindent The practical importance of the Eulerian idempotent in numerical analysis arises in the study of backward error analysis, where the following lemma provides a computational formula for the logarithm:

\begin{proposition}[{\cite{lundervold2009hao}}]\label{eulerlog} For $\alpha\in G(H)$ and $h\in H$, we have
\[\log^\ast(\alpha)(h) = \alpha(e(h)).\]
In other words, the logarithm can be written as right composition with the eulerian idempotent: \[\log^{\ast} = \_ \circ e: G(H) \rightarrow \g(H).\]
\end{proposition}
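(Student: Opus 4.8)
The plan is to exhibit the right-composition map $L_\alpha\colon \End_k(H)\to \Hom_k(H,\k)$, $\phi\mapsto \alpha\circ\phi$, as a morphism of convolution algebras, and then transport the power-series definition of the Eulerian idempotent $e=\log^{\ast}(\Id)$ through it. Both source and target carry a convolution built from the same coproduct $\Delta$ of $H$: in $\End_k(H)$ it is $\phi\ast\psi = \mu_H\circ(\phi\otimes\psi)\circ\Delta$ with unit $\delta=\eta\circ\epsilon$, while in $\Hom_k(H,\k)$ it is the convolution of functionals with unit $\epsilon$. The whole proposition then reduces to the single identity $\alpha\circ\log^{\ast}(\Id) = \log^{\ast}(\alpha)$, i.e.\ to showing that $L_\alpha$ intertwines the two convolution products and carries one logarithm to the other.

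First I would prove the key lemma: if $\alpha\in G(H)$ then $L_\alpha(\phi\ast\psi) = L_\alpha(\phi)\ast L_\alpha(\psi)$. This is the only nontrivial input, and it is exactly where the character hypothesis (that $\alpha$ is an algebra morphism) is used. Indeed,
\[\alpha\circ(\phi\ast\psi) = \alpha\circ\mu_H\circ(\phi\otimes\psi)\circ\Delta = \mu_\k\circ(\alpha\otimes\alpha)\circ(\phi\otimes\psi)\circ\Delta = (\alpha\circ\phi)\ast(\alpha\circ\psi),\]
the middle equality being the multiplicativity $\alpha\circ\mu_H=\mu_\k\circ(\alpha\otimes\alpha)$. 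I would then record the two boundary values $L_\alpha(\Id)=\alpha$ and $L_\alpha(\delta)=\alpha\circ\eta\circ\epsilon=\epsilon$, the latter because $\alpha(\eta(1))=1$. Hence $L_\alpha$ sends the convolution unit to the convolution unit, and, writing $J=\Id-\delta$, sends $J$ to $\alpha-\epsilon$.

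Before applying $L_\alpha$ to the defining series I must justify that the infinite convolution sums make sense and may be handled termwise. Here I would invoke connectedness and the grading: for $h\in H$ of degree $n$ both $J^{\ast i}(h)$ and $(\alpha-\epsilon)^{\ast i}(h)$ vanish for $i>n$, since $J$ and $\alpha-\epsilon$ annihilate $H_0$ and iterating $\Delta$ splits $h$ into pieces of strictly positive degree. Thus each series is a finite sum when evaluated on a fixed element, and all interchanges of $L_\alpha$ with these sums are legitimate. Combining this with the lemma gives
\[\alpha\circ e = L_\alpha\Bigl(\sum_{i\ge 1}\tfrac{(-1)^{i+1}}{i}J^{\ast i}\Bigr) = \sum_{i\ge 1}\tfrac{(-1)^{i+1}}{i}\bigl(L_\alpha(J)\bigr)^{\ast i} = \sum_{i\ge 1}\tfrac{(-1)^{i+1}}{i}(\alpha-\epsilon)^{\ast i} = \log^{\ast}(\alpha).\]
Evaluating on $h$ yields $\log^{\ast}(\alpha)(h)=\alpha(e(h))$ and hence $\log^{\ast}=\_\circ e$; that $\log^{\ast}(\alpha)$ lands in $\g(H)$ is the standard fact that the convolution logarithm of a character is an infinitesimal character (equivalently, it is inherited from $e$ having image in the primitives).

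The main obstacle is not any deep difficulty but the careful bookkeeping of the two distinct convolution products and their units: one must keep straight that $\log^{\ast}(\Id)$ is computed in $\End_k(H)$ while $\log^{\ast}(\alpha)$ is computed in $\Hom_k(H,\k)$, and that it is precisely the local finiteness coming from the connected grading that legitimizes pushing the morphism $L_\alpha$ through the defining power series term by term.
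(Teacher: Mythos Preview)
Your argument is correct. The paper does not actually supply a proof of this proposition; it is quoted from \cite{lundervold2009hao} and left unproved here, so there is no in-paper proof to compare against. That said, your approach---showing that post-composition by a character $\alpha$ gives a convolution-algebra morphism $\End_k(H)\to\Hom_k(H,\k)$ sending $\Id\mapsto\alpha$ and $\delta\mapsto\epsilon$, and then transporting the defining power series $e=\log^\ast(\Id)$ term by term using the local finiteness afforded by the connected grading---is exactly the standard argument one would expect, and all the steps you give are sound.
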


\paragraph{Dynkin idempotent}

The classical \emph{Dynkin operator} on the shuffle Hopf algebra is given by left-to-right bracketing: \[D(a_1...a_n) = [\dots[[a_{1},a_{2}], a_{3}], \dots, a_{n}], \quad\mbox{where $[a_i,a_j] = a_ia_j-a_ja_i$}.\]  Letting $Y(\omega) = \#(\omega)\omega$ denote the grading operator, where $\#(\omega)$ is word length, it is known that the \emph{Dynkin idempotent} $Y^{-1}\circ D$ is an idempotent projection on $\Lie(A)$. As in \cite{ebrahimi-fard07alt}, the Dynkin operator can be written as the convolution of the antipode $S$ and the grading operator $Y$: $D = S\ast Y$. This description can be generalized to any graded, connected and commutative Hopf algebra $H$:

\begin{definition}
Let $H$ be a graded, commutative and connected Hopf algebra with grading operator $Y: H \rightarrow H$.  The \emph{Dynkin operator} is the map $D: H \rightarrow H$ given as \[D := S * Y.\]
\end{definition}

\subsubsection{The non-commutative Bell polynomials}\label{bell}

In \cite{munthe-kaas1998rkm} some non-commutative polynomials $B_n$ were introduced to express the Butcher order theory of Runge-Kutta methods on manifolds. In \cite{lundervold2009hao} it was observed that these polynomials were a non-commutative analogue of Bell polynomials, and that they could be used to study more general flows on manifolds. We recall their definition here. 

Let $\I=\{d_j\}_{j=1}^\infty$ be an infinite alphabet in 1--1 correspondence with $\NN^+$, and consider the 
free associative algebra $\Hfdb = \RR\langle \I \rangle$ with the grading given by $|d_j| = j$ and $|d_{j_1}\cdots d_{j_k}|= j_1+\cdots+j_k$. Let $\partial\colon \Hfdb\rightarrow \Hfdb$ be the derivation given by $\partial(d_i) = d_{i+1}$, linearity and the Leibniz rule
$\partial(\omega_1\omega_2) = \partial(\omega_1)\omega_2 + \omega_1\partial(\omega_2)$ for all $\omega_1,\omega_2\in\I^*$. We let $\#(\omega)$ denote the length of the word $\omega$.

\begin{definition}\label{def:bell}The non-commutative Bell polynomials $B_n := B_n(d_1,\ldots, d_n)\in \Hfdb$ are defined by the recursion
\begin{eqnarray*}
B_0 & = & \one\\
B_n& =& (d_1+\partial)B_{n-1} = (d_1+\partial)^n \one\quad \mbox{for $n>0$}.
\end{eqnarray*}
\end{definition}
\noindent The first few are
 \begin{eqnarray*}
B_0 &=& \one\\
B_1 & = & d_1\\
B_2 & = & d_1^2 + d_2 \\
B_3 &=& d_1^3 + 2d_1d_2 + d_2d_1+d_3\\
B_4 & = & d_1^4+ 3d_1^2 d_2 +2d_1d_2d_1 + d_2d_1^2 +3d_1d_3+  d_3d_1 + 3d_2d_2 + d_4.
\end{eqnarray*}
We write $B_{n,k} := B_{n,k}(d_1,\dots, d_{n-k+1})$ for the part of $B_n$ consisting of the words of length $k$, e.g. $B_{4,3}=3d_1^2d_2 + 2d_1d_2d_1 + d_2d_1^2$. It is often useful to employ the polynomials  $Q_{n}$ and $Q_{n,k}$ related to  $B_{n}$ and $B_{n,k}$ by the following rescaling:
\begin{eqnarray}
Q_{n,k}(d_1,\ldots,d_{n-k+1}) &=& \frac{1}{n!}B_{n,k}(1!d_1,\ldots,j!d_j,\ldots) = \mathop{\sum}_{|\omega| = n, \#(\omega)=k} \kappa({\omega})
\omega\\
Q_n(d_1,\ldots,d_n)&=&\sum_{k=1}^n Q_{n,k}(d_1,\ldots,d_{n-k+1})\\
Q_0 &:=& \one.
\end{eqnarray}
\noindent These polynomials can be used to define an operator $Q$ on any graded Hopf algebra $H$. Let $d_i$ be defined on $H^*$ by 
\begin{equation}\label{eq:Q}
d_i(\alpha) = \alpha_i = \left.\alpha\right|_{H_i}, \qquad d_id_j(\alpha) = \alpha_i\ast\alpha_j,
\end{equation}
where $\alpha_i$ is the degree $i$ component of $\alpha$ and $*$ is the convolution product. The operator $Q$ is a bijection from infinitesimal characters to characters of $H$ (for details, see \cite{lundervold2009hao}).

\subsection{Lie--Butcher series and flows of vector fields}\label{flows}
Flows $y_0 \mapsto y(t) = \Psi_t(y_0)$ on the manifold $\M$ can be represented by LB-series in several different ways. Here are three procedures, giving rise to what we will call LB-series of Type 1, 2 and 3:
\begin{enumerate}
\item In terms of pullback series: Find $\alpha\in G(\Hn)$ such that 
  \begin{equation}\label{LBpullback}
\Psi(y(t)) = \Bs(\alpha)(y_0)[\Psi] \quad\mbox{for any $\Psi\in U(\g)^\M$.}
\end{equation}
This representation is used in the analysis of Crouch--Grossman methods by Owren and Marthinsen~\cite{owren1999rkm}.
In the classical setting this is called a $S$-series~\cite{murua1999fsa}.
\item In terms of an autonomous differential equation: Find $\beta\in \g(\Hn)$ such that $y(t)$ solves
\begin{equation}
y'(t) = \Bs(\beta)(y(t)).
\end{equation}
This is called backward error analysis (confer Section \ref{bea}).
\item In terms of a non-autonomous equation of \emph{Lie type} (time dependent frozen vector field): Find $\gamma\in \g(\Hsh)$ such that $y(t)$ solves
\begin{equation}\label{lietype}
y'(t) = \left(\frac{\partial}{\partial t} \Bs(\gamma)(y_0)\right) y(t).
\end{equation}
This representation is used in~\cite{munthe-kaas1995lbt,munthe-kaas1998rkm}. In the classical setting this is (close to) the standard definition of $B$-series. 
\end{enumerate}
The algebraic relationships between the coefficients $\alpha$, $\beta$
and $\gamma$ in the above LB-series are \cite{lundervold2009hao}:
\begin{align*}
\beta &= \alpha\opr e &\mbox{$e$ is Euler idempotent in $\Hn$. (Proposition \ref{eulerlog})}\\
\alpha &= \exp^\bpr(\beta)&\mbox{Exponential wrt.\ GL-product}\\
\gamma &= \alpha\opr Y^{-1}\opr D &\mbox{Dynkin idempotent in $\Hsh$. \cite[Proposition 4.4]{lundervold2009hao}}\\
\alpha &= Q(\gamma)&\mbox{$Q$-operator~(\ref{eq:Q}) in $\Hsh$. \cite[Proposition 4.9]{lundervold2009hao}}
\end{align*}
By using these relationships one can convert between the various representations of flows.

In the notation in the following examples of LB-series we suppress the vector fields and elementary differentials, and phrase the LB-series in terms of the (dual of the) coefficient functions.

\begin{example}[The exact solution]
The exact solution of a differential equation 
\[y'(t) = F(y(t))\] 
can be written as the solution of
\[y' = F_t\dpr y, \quad y(0)= y_0,  \]
where $F_t=F(y(t))\in \g$ is the pullback of $F$ along the time dependent flow of $F$. Let $F_t = \frac{\partial}{\partial t}\Bs_t(\gamma)$. By \cite[Proposition 4.9]{lundervold2009hao} the pullback is given by $\Bs_t(Q(\gamma_{\text{Exact}}))[F]$, so 
\[Y\opr\gamma_{\text{Exact}} = Q(\gamma_{\text{Exact}})[\ab]\Rightarrow \gamma_{\text{Exact}} = Y^{-1}\opr B^+(Q(\gamma_{\text{Exact}})).\]
Note that this is reminiscent of a so-called combinatorial Dyson--Schwinger equation~\cite{foissy2008fdb}. Solving by iteration yields
\begin{eqnarray*}
\gamma_{\text{Exact}} & = &
\ab + \frac{1}{2!}\aabb + \frac{1}{3!}\left(\aababb+\aaabbb\right) + \frac{1}{4!}\left(\aabababb+\aaabbabb+2\aabaabbb
+\aaababbb+\aaaabbbb\right)+ \frac{1}{5!}(\aababababb+\aaabbababb\\
& &
+2\aabaabbabb +3\aababaabbb+\aaababbabb+ \aaaabbbabb+3\aaabbaabbb+3\aabaababbb+3\aabaaabbbb+\aaabababbb+\aaaabbabbb+2\aaabaabbbb+\aaaababbbb+\aaaaabbbbb
)+\\
& &
\frac{1}{6!}\left(\aabababababb+\cdots\right)+\cdots
\end{eqnarray*}
Note that a formula for the LB-series for the exact solution was given in \cite{owren1999rkm}. We observe that there cannot be any commutators of trees in this expression. Therefore, in LB-series of numerical integrators, commutators of trees must be zero up to the order of the method.
\end{example}

\begin{example}[The exponential Euler method]\label{eulerMethod}
The exponential Euler method \cite{iserles2000lgm} can be written as follows:
\[y_{n+1} = \exp(hf(y_n))y_n,\] or, by rescaling the vector field $f$, as \[y_{n+1} = \exp(f(y_n))y_n.\] This equation can be interpreted as a pullback equation of the form $\Phi(y_{n+1}) = \Bs(\exp(\ab))[\Phi]y_n$, so 
\[\alpha = \exp(\ab) = \one + \ab + \frac{1}{2!}\ab\ab + \frac{1}{3!}\ab\ab\ab + \cdots.\]
(Here the Grossman-Larson product is the same as concatenation). Note that $\exp(\ab) = Q(\ab),$ so the Type 3 LB-series for the Euler method is simply \[\gamma_{\text{Euler}} = \ab.\]
\end{example}

\begin{example}[The implicit midpoint method]\label{midpointMethod}
The implicit midpoint method \cite{iserles2000lgm} can be presented as:
\begin{eqnarray}\label{midpoint}
\sigma &=& f(\exp(\frac{1}{2}\sigma)y_n) \\
y_{n+1} &=& \exp(\sigma)y_n
\end{eqnarray}
We make the following ansatz: 
\begin{equation}\label{mp:ansatz}
\sigma = \sum_{\omega} \alpha(\omega)\omega = \alpha(\ab)\ab + \alpha\left(\aabb\right)\aabb + \alpha\left(\aababb\right)\aababb + \alpha\left([\ab,\aabb]\right)[\ab,\aabb] + \alpha\left(\aaabbb\right)\aaabbb + \cdots,
\end{equation}
i.e. that $\sigma$ can be written as an infinitesimal LB-series. From Equation \ref{midpoint}, we get that
\begin{equation}\label{mp:sigma}
\sigma = \sum_{j=0}^{\infty} \frac{(\sigma)^j}{2^jj!} [\ab].
\end{equation}
Since there are no forests in this expression, we must have $\alpha([\omega, \omega']) = 0$ for all $\omega, \omega' \in \OT$. If we write $\tau= B^+(\tau_1 \cdots \tau_j)$, then by combining Equation \ref{mp:sigma} with the ansatz, we see that coefficients of the LB-series are given recursively as $\alpha(\ab) = \frac{1}{2}$, 
\begin{equation}
\alpha(\tau) = \frac{1}{2^jj!} \alpha(\tau_1) \cdots \alpha(\tau_j).
\end{equation}
Hence
\begin{eqnarray*}
\alpha_{\text{Midpoint}} & = &
\ab + \frac{1}{2!}\aabb + \frac{1}{2}\left(\frac{1}{4}\aababb+\aaabbb\right) +\cdots
\end{eqnarray*}
\end{example}

\section{Substitution law for Lie--Butcher-series}
In this section we will generalize the substitution law for B-series \cite{chartier2005asl} to LB-series. Once the substitution law has been established we will apply it to backward error analysis for numerical methods based on LB-series. 

\subsection{The substitution law}\label{substlov}
Consider $\N$ as a D-algebra where the derivations are the Lie polynomials $D(\N) = \g(\Hn)\cap \N$. By the universal property of $\N$, we know that for any map  $a\colon \C\rightarrow D(\N)$ there exists a unique D-algebra homomorphism $\F_a:\N\rightarrow \N$ such that $\F_a(c)=a(c)$ for all $a\in \C$. This is called the substitution law.

\begin{multicols}{2}
\begin{definition}\label{substlaw}For any map  $a: \C \rightarrow D(\N)$ the unique D-algebra homomorphism $a\star:\N\rightarrow \N$ such that $a(c) = a\star c$ for all $c\in \C$ is called $a$-substitution\footnote{In most applications we want to substitute infinite series and extend $a\star$ to a homomorphism $a\star\colon\N^*\rightarrow \N^*$. 
The extension to infinite substitution is straightforward because of the grading, we omit details. We write $a\star$ also for infinite substitution.}.
\newline
\begin{diagram}[labelstyle=\scriptstyle]
\C &\rInto& \N\\
\dTo^a && \dTo_{a\star} \\
D(\N) &\rInto& \N.
\end{diagram}
\end{definition}
\end{multicols}

\begin{theorem}\label{substBseries}
The substitution law defined in Definition \ref{substlaw} corresponds to the substitution of B-series in the sense that 
\[\Bs_{\Bs_f(\beta)}(\alpha) = \Bs_f(\beta \star \alpha)\]
\end{theorem}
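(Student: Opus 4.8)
The plan is to observe that both sides of the claimed identity are D-algebra homomorphisms from $\N^*$ to $U(\g)^\M$, and then to invoke the universal property of the free D-algebra (Proposition \ref{universalD-alg}) to conclude that they coincide as soon as they agree on the single generator $\ab$. Concretely, I would set $g := \Bs_f(\beta)$ and show that $\Bs_g = \Bs_f \circ (\beta\star)$ as maps $\N^* \to U(\g)^\M$, where $\beta\star$ denotes the substitution homomorphism attached to the map $\ab \mapsto \beta$ in the case $\C = \{\ab\}$.

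First I would check that both sides are genuinely D-algebra homomorphisms. For the left-hand side this amounts to verifying that $g = \Bs_f(\beta)$ is a derivation, i.e.\ $g \in \g^\M = D(U(\g)^\M)$, so that $\Bs_g$ is a well-defined LB-series in the sense of Definition \ref{lbseries}. This holds because $\beta \in D(\N)$ by the hypothesis underlying the substitution law (Definition \ref{substlaw}), and $\Bs_f$, being a morphism of D-algebras, satisfies $\Bs_f(D(\N)) \subset D(U(\g)^\M)$ by definition of such a morphism. For the right-hand side, $\beta\star \colon \N^* \to \N^*$ is a D-algebra homomorphism by Definition \ref{substlaw} and $\Bs_f \colon \N^* \to U(\g)^\M$ is one by Definition \ref{lbseries}; since a composition of D-algebra morphisms is again a D-algebra morphism (immediate from the morphism axioms $\F(ab)=\F(a)\F(b)$ and $\F(a[b]) = \F(a)[\F(b)]$), the map $\Bs_f \circ (\beta\star)$ is a D-algebra homomorphism.

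It then remains to evaluate both homomorphisms on the generator $\ab$. On the left, $\Bs_g(\ab) = g = \Bs_f(\beta)$ by the defining property of LB-series. On the right, $(\Bs_f \circ (\beta\star))(\ab) = \Bs_f(\beta\star\ab) = \Bs_f(\beta)$, using $\beta\star\ab = \beta$ from the definition of $a$-substitution with $a(\ab) = \beta$. Hence the two homomorphisms agree on the generator, and by the uniqueness clause of Proposition \ref{universalD-alg} they are equal, which is exactly the asserted identity. The one point requiring extra care, and the \emph{main technical obstacle}, is that the universal property is stated for the finite free object $\N$, whereas the identity is asserted over $\N^*$; I would invoke the grading-based extension of the universal property to infinite series noted in the footnote to Definition \ref{substlaw}, checking that the projective-limit structure $\N^* = \underleftarrow{\lim}\,\N_k$ is respected degree by degree so that equality on each finite truncation yields equality of the full series. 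The remaining verifications (composition preserving the D-algebra structure, and preservation of derivations) are routine.
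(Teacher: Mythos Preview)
Your proposal is correct and follows essentially the same categorical argument as the paper: both identify $\Bs_f\circ(\beta\star)$ and $\Bs_{\Bs_f(\beta)}$ as D-algebra homomorphisms agreeing on $\ab$, then invoke the uniqueness clause of Proposition~\ref{universalD-alg}. The only presentational difference is that the paper isolates the fact $\Bs_f(\beta)\in\g^\M$ as a separate lemma (Lemma~\ref{substBserieslemma}), proving it by noting that trees map to derivations and that commutators of derivations are derivations, whereas you obtain it directly from the clause $\F(D(\A))\subset D(\A')$ in the definition of D-algebra morphism; both justifications are valid.
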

\noindent The theorem is easily proven by using the following lemma:
\begin{lemma}\label{substBserieslemma}
For all $\beta: \{\ab\} \rightarrow D(\N^*)$ and all B-series $B_f: \N^* \rightarrow U(\g)^{\M},$ the composition $B_f \circ \beta$ has image in $\g^{\M}$. In other words, B-series maps $D(\N)$ to derivations on $\M$. 
\end{lemma}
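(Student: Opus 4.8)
The plan is to show that the map $\Bs_f \circ \beta$ sends any derivation in $\N^*$ to a derivation in $U(\g)^\M$, i.e.\ to an element of $\g^\M$, which is exactly what is needed because a $\mathrm{D}$-algebra homomorphism is required to preserve derivations. The key structural fact I would lean on is that $\Bs_f\colon \N^*\to U(\g)^\M$ is a $\mathrm{D}$-algebra homomorphism (Definition \ref{lbseries} and Proposition \ref{universalD-alg}), and that by definition a $\mathrm{D}$-algebra morphism $\F$ satisfies $\F(\mathcal{D}(\N^*))\subset \mathcal{D}(U(\g)^\M)$. So the statement essentially unwinds the definitions, and the real content is to identify $\mathcal{D}(U(\g)^\M)$ with the vector fields $\g^\M$.

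\medskip
\noindent First I would recall from the preceding lemma (the $\mathrm{D}$-algebra lemma for $U(\g)^\M$) that the derivations of $U(\g)^\M$ with respect to the composition $(.)[.]$ are precisely the vector fields $f\in\g^\M$: an element $f$ lies in $\mathcal{D}(U(\g)^\M)$ exactly when $f[gh]=(f[g])h+g(f[h])$ for all $g,h$, and the Leibniz-rule computation in coordinates (the formula $f[gh]=\sum_{i,j}\cdots$) shows this Leibniz property holds iff $f$ is first-order, i.e.\ $f\in\g^\M$. Thus $\mathcal{D}(U(\g)^\M)=\g^\M$. Second, I would observe that $\beta\colon\{\ab\}\to D(\N^*)$ takes values in the derivations $D(\N^*)=\g(\Hn)\cap\N^*$, the primitive (Lie) elements, as set up in Section \ref{substlov}.

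\medskip
\noindent The crux is then the single implication: since $\Bs_f$ is a $\mathrm{D}$-algebra homomorphism, it maps $\mathcal{D}(\N^*)$ into $\mathcal{D}(U(\g)^\M)=\g^\M$. Concretely, for a derivation $P\in\mathcal{D}(\N^*)$ and any $G,H\in U(\g)^\M$ one checks
\[
\Bs_f(P)[GH] = \big(\Bs_f(P)[G]\big)H + G\big(\Bs_f(P)[H]\big),
\]
which follows because $\Bs_f$ intertwines both the associative product and the bracket $(.)[.]$, and $P$ itself satisfies the Leibniz identity $P[\omega_1\omega_2]=(P[\omega_1])\omega_2+\omega_1(P[\omega_2])$ in $\N^*$. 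Pushing this identity through $\Bs_f$ and using multiplicativity gives the displayed Leibniz rule for $\Bs_f(P)$, hence $\Bs_f(P)\in\g^\M$. I would give the argument for $P$ of pure degree and then extend by the grading/projective-limit structure of $\N^*$ noted after Definition \ref{lbseries}.

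\medskip
\noindent The main obstacle I anticipate is bookkeeping rather than conceptual: one must be careful that the homomorphism property of $\Bs_f$ is exactly the statement $\Bs_f(a[b])=\Bs_f(a)[\Bs_f(b)]$ together with $\Bs_f(ab)=\Bs_f(a)\Bs_f(b)$, and that these are precisely what is needed to transport the Leibniz identity; the subtlety is that a $\mathrm{D}$-algebra morphism is \emph{defined} to preserve derivations, so one should verify that $\Bs_f$ genuinely satisfies the $\mathrm{D}$-morphism axioms on all of $\N^*$ (not just on generators) before invoking $\F(\mathcal{D})\subset\mathcal{D}$. Once that is in hand, the lemma is immediate, and it feeds directly into Theorem \ref{substBseries} by guaranteeing that the composite $\Bs_f\circ(\beta\star-)$ is a well-defined $\mathrm{D}$-algebra map whose uniqueness identifies it with $\Bs_{\Bs_f(\beta)}$.
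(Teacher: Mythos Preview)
Your approach is correct and reaches the same conclusion as the paper, but the route is genuinely different in emphasis. You invoke the defining property of a D-algebra morphism directly: since $\Bs_f$ is a D-algebra homomorphism (Proposition~\ref{universalD-alg}), it sends $\mathcal{D}(\N^*)$ into $\mathcal{D}(U(\g)^\M)$, and you identify the latter with $\g^\M$. The paper instead argues more concretely from generators: it observes that trees map to derivations (again via the D-morphism property), and then, since Lie polynomials are built from trees by iterated commutators and $\Bs_f$ respects the associative product, it reduces the claim to the elementary check that the commutator $[V,W]=VW-WV$ of two derivations in $U(\g)^\M$ is again a derivation. Your argument is cleaner and more categorical; the paper's is more self-contained in that it does not rely on already knowing that $D(\N)$ coincides with the Lie polynomials, nor on the full strength of ``$\F(\mathcal{D})\subset\mathcal{D}$'' for arbitrary derivations --- it only needs it on the generators $\OT$ and then closes up under brackets by hand. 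Both approaches implicitly use that derivations in $U(\g)^\M$ are the vector fields $\g^\M$, which you flag explicitly and the paper leaves tacit. The subtlety you raise at the end --- whether $\Bs_f$ really satisfies the D-morphism axioms globally --- is exactly what the paper's generator-plus-commutator argument sidesteps, so in that sense the paper's proof is marginally more robust against circularity.
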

\begin{proof}
It is enough to prove this for Lie polynomials \footnote{Lie series are formal series whose homogeneous components are Lie polynomials \cite{reutenauer93fla}}. Since $B_f$ is a D-algebra homomorphism it maps trees to derivations, so the only thing we have to check is that the commutator $[V,W]=VW-WV$ of two derivations $V$ and $W$ is a derivation. This is a straightforward calculation.
\end{proof}

\begin{proof}[Proof of Theorem \ref{substBseries}]
Except for the use of Lemma \ref{substBserieslemma}, the proof is purely categorical. Let $B_f$ be a B-series. The composition of $B_f$ with the map $\beta \star$ can be written in diagrammatic form as
\begin{diagram}
\{\ab\} &&\rInto&& \N^* \\
             & \rdTo^{\beta}&           &           &           \\
            &           & D(\N) &           &  \dTo_{\beta\star}\\
             &           &           & \rdInto\\
 &            & &            & \N^* \\
             &           &  &            & \dTo_{B_f} \\
            &            &           &             & U(\g)^M
\end{diagram}
By Lemma \ref{substBserieslemma} the composition of the two diagonal arrows and $B_f$ actually has image in $\g^{\M}$. Therefore the universal property for the diagram obtained by adding the map $\Bs_f \circ \beta: \{\ab\} \rightarrow \g^M$ to the above diagram shows that $\Bs_f \circ \beta\star = \Bs_{\Bs_f \circ \beta}$, and hence the theorem.
\end{proof}

Many of the useful properties of the substitution law follow immediately from the fact that $a \star$ is a D-algebra homomorphism. For example, $a\star: \N\rightarrow \N$ is a linear map which for any $n,n'\in\N$ satisfies
\begin{align*}
a\star\one &= \one\\
a\star (nn') &= (a\star n)(a\star n')\\
a\star (n\car n') &= (a\star n)\car (a\star n')\\
a\star (n\bpr n') &= (a\star n)\bpr (a\star n')\\
a\star(n\opr S) &= (a\star n)\opr S\\
a\star(n\opr e) &= (a\star n)\opr e\\
\end{align*}
where $S$ is the antipode and $e$ is Euler map in $\Hn$.

The free D-algebra $\N$ is the universal enveloping algebra of the free \emph{post-Lie algebra} $\g$ of rooted trees \cite{munthe-kaas2012opl}. By defining a coproduct by requiring that the elements of $\g$ are primitive (e.g. the deshuffle coproduct of Section \ref{hopfalgebras}), it is a bialgebra. The unique D-algebra morphism $a\star$ is a coalgebra morphism for this coproduct:
\begin{lemma}\label{coalg_subst}
The map $a\star$ is a coalgebra morphism with respect to the coproduct given by deshuffling of words (Section \ref{hopfalgebras}). That is, 
\[(a\star \otimes a\star) \circ \Delta_{Sh} = \Delta_{Sh} \circ a\star,\]
where $\Delta_{Sh}$ denotes the deshuffling coproduct. 
\end{lemma}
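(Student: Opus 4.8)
The plan is to exploit that $a\star$ is a homomorphism for the concatenation product and to reduce the claim to the generators of $\N$. The key observation is that both maps in the asserted identity are algebra homomorphisms from $(\N,\text{concatenation})$ into $\N\otimes\N$ equipped with the componentwise concatenation product. Indeed, from the properties already established, $a\star$ satisfies $a\star(nn')=(a\star n)(a\star n')$ and $a\star\one=\one$, so it is a concatenation homomorphism; and $\Delta_{Sh}$ is multiplicative for concatenation by construction (this is exactly what makes $(\N,\text{concatenation},\Delta_{Sh})$ a bialgebra), hence an algebra homomorphism into $\N\otimes\N$. The tensor product $a\star\otimes a\star$ is likewise an algebra homomorphism, so both $\Delta_{Sh}\circ a\star$ and $(a\star\otimes a\star)\circ\Delta_{Sh}$ are algebra homomorphisms $\N\to\N\otimes\N$.

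Next I would use that $\N=\k\langle\OT\rangle$ is the free associative algebra on the set of trees $\OT$, so two algebra homomorphisms out of $\N$ coincide as soon as they agree on the generators $\tau\in\OT$. Thus it suffices to verify the identity on a single tree $\tau$. Since trees are primitive, $\Delta_{Sh}(\tau)=\tau\otimes\one+\one\otimes\tau$, and applying $a\star\otimes a\star$ together with $a\star\one=\one$ gives $(a\star\otimes a\star)\Delta_{Sh}(\tau)=(a\star\tau)\otimes\one+\one\otimes(a\star\tau)$. The left-hand side is $\Delta_{Sh}(a\star\tau)$, so the whole lemma reduces to the single assertion that $a\star\tau$ is primitive for $\Delta_{Sh}$.

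This last point is where the structural input enters, and is the step I expect to carry the real content. Each tree is a derivation of the D-algebra $\N$ (the grafting rule $\tau\car(\omega_1\omega_2)=(\tau\car\omega_1)\omega_2+\omega_1(\tau\car\omega_2)$ is precisely the derivation property of Definition \ref{d-alg}), so $\tau\in D(\N)$. Because $a\star$ is a D-algebra morphism it maps $D(\N)$ into $D(\N)$, whence $a\star\tau\in D(\N)$. Finally $D(\N)=\g(\Hn)\cap\N$ consists of Lie polynomials, and every Lie polynomial is primitive for $\Delta_{Sh}$: the primitive elements form a Lie subalgebra under the commutator of concatenation, this subalgebra contains all trees, and hence contains the whole free Lie algebra they generate. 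Therefore $a\star\tau$ is primitive, which completes the reduction and proves the lemma; the extension to infinite series follows from the grading, as noted in Definition \ref{substlaw}.
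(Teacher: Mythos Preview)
Your argument is correct and follows the same line as the paper's proof. The paper proves the identity on primitive elements (the trees) and then passes to arbitrary forests by induction on word length; your version packages the inductive step as the statement that both sides are concatenation-algebra homomorphisms out of the free algebra $\N=\k\langle\OT\rangle$, which is the same reduction. Your justification of the base case---that $a\star\tau$ is primitive because $a\star$, being a D-algebra morphism, sends $\tau\in D(\N)$ to $D(\N)$, and $D(\N)$ consists of Lie polynomials, hence of $\Delta_{Sh}$-primitives---makes explicit precisely what the paper leaves implicit in the phrase ``easily proven for primitive elements.''
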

\begin{proof}
The result is easily proven for primitive elements. The general case follows by induction on the length of words.
\end{proof}

\begin{remark}[The Hopf algebra for the substitution law]
Based on the results in \cite{calaque2009tih} and the fact that the operad governing post-Lie algebras is known, it is possible to describe the Hopf algebra for the substitution law following the program in \cite{calaque2009tih}. This is a project currently under development \cite{ebrahimi-fard2011otp}.
\end{remark}

\subsection{A formula for the substitution law} The substitution law can be calculated recursively using a formula involving trees. To write down the formula we need to look at cutting operations on trees and forests. 

\paragraph{Cutting trees and forest.} Let $\tau \in \OT$ be an ordered rooted tree. An \emph{elementary left cut} $c$ of $\tau$ is a choice of a set of branches $E$ of $\tau$ to be removed from $\tau$. These are chosen in a systematic manner: if an edge $e$ is in $E$ then all the branches on the same level and to the left of $e$  must also be in $E$. Each cut splits $\tau$ into two components: the pruned part $P^c_{el}(\tau)$ consisting of the trees that were cut off concatenated together, and the remaining part $R^c_{el}(\tau)$ consisting of the tree containing the root. We also consider the \emph{empty cut}, i.e. the cut $c$ so that $P^c_{el}(\tau) = \one$ and $R^c_{el}(\tau)=\tau$, to be an elementary cut.

\begin{table}[!ht]
  \begin{equation*}
    \begin{array}{c|cc|c} 
       \hline \\[-2mm]
        \tau = \aabaabbb&& P^c_{el}(\tau) & R^c_{el}(\tau)  \\[1mm]
        \hline \\[-2mm]
        && \one & \aabaabbb \\[2mm]
        && \ab & \aaabbb \\[2mm]
        &&\ab\,\aabb & \ab \\[2mm]
        &&\ab & \aababb
    \end{array}
  \end{equation*}
\end{table}

\noindent A \emph{left admissible cut} on $\tau$ consists of a collection of elementary cuts applied to $\tau$ with the property that any path from the root to any vertex of $\tau$ crosses at most one elementary cut. The pruned parts corresponding to each elementary cut are shuffled together, with no internal shuffling of the trees resulting from each elementary cut. An admissible cut of a tree results in a collection of shuffles of forests $P^c(\tau)$ and a tree $R^c(\tau).$ The collection of all left admissible cuts for a tree $\tau$ is written as $LAC(\tau)$.
\begin{figure}[!h]
  \begin{equation*}
    \begin{array}{c|cc|c} 
       \hline \\[-2mm]
        \tau = \aabaabbb&& P^c(\tau) & R^c(\tau)  \\[1mm]
        \hline \\[-2mm]
        && \one & \aabaabbb \\[2mm]
        && \ab & \aaabbb \\[2mm]
        &&\ab\,\aabb & \ab \\[2mm]
        &&\ab & \aababb \\[2mm]
        && \ab \shuffle \ab & \aabb
    \end{array}
  \end{equation*}

\end{figure}

\noindent We extend these cutting operations to forests $\omega \in \OF$ by applying the $B^+$ operator to $\omega$ and then cut it as a tree without using cuts of branches growing out of the root, before finally applying the $B^-$ operator to $R^c(\omega)$ to remove the added root.

The coproduct $\Delta_N$ of the Hopf algebra $\Hn$ (Section \ref{sect:algebra}) can be formulated in terms of these cuts \cite{munthe-kaas2008oth}. First one must extend the left admissible cuts to include the \emph{full} cut of a tree, which cuts ``below'' the root, so that $P^c_{el}(\tau)$ is again $\tau$. The set of all left admissible cuts, including the full cut, is denoted by FLAC, and the coproduct $\Delta_N$ can be written as:
\begin{equation}
\Delta_N(\omega) = \sum_{c \in FLAC} P^c(\omega) \otimes R^c(\omega).
\end{equation}
Table \ref{coprod} gives the result of this coproduct applied to all forests up to order 4. If we let $\tilde{\Delta}_N(\omega)$ consist only of forests resulting from not using the empty nor the full cut, we get $\Delta_N(\omega) = 1 \otimes \omega + \omega \otimes 1 + \tilde{\Delta}_N(\omega)$. The operation $\tilde{\Delta}_N$ is called the \emph{reduced coproduct}.

\paragraph{A formula for the substitution law.} We will give a formula for the dual of the substitution, i.e. a formula for $a_{\star}^T$, where $\langle a \star b, \omega \rangle = \langle b, a_{\star}^T(\omega)\rangle$. The formula is based on the pruning operation $\mathcal{P}$ on forests. 
\begin{lemma}[Pruning] Let $\omega$ and $\nu$ be two forests. The dual of grafting, i.e. the operation defined by $\langle \nu\car\omega',\omega \rangle = \langle \omega', \mathcal{P}_{\nu}(\omega) \rangle$, is given by:  
\begin{equation*}
\mathcal{P}_{\nu}(\omega) = \sum_{c \in LAC(\omega)} \langle \nu, P^c(\omega)\rangle R^c(\omega).
\end{equation*}
 The operation is called \emph{pruning}.
\end{lemma}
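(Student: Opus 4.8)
The plan is to read the claimed identity as an adjointness statement and to verify it by dualizing the recursive definition of grafting. Since $\mathcal{P}_\nu$ is extended by linearity, it suffices to treat basis forests $\nu,\omega',\omega\in\OF$, and because $\OF$ is an orthonormal basis the identity $\langle\nu\car\omega',\omega\rangle=\langle\omega',\mathcal{P}_\nu(\omega)\rangle$ is equivalent to the single combinatorial equality that the coefficient of $\omega$ in $\nu\car\omega'$ equals $\sum_{c}\langle\nu,P^c(\omega)\rangle$, the sum ranging over those $c\in LAC(\omega)$ with $R^c(\omega)=\omega'$. Equivalently, setting $\rho(\omega)=\sum_{c\in LAC(\omega)}P^c(\omega)\otimes R^c(\omega)$, I must show that $\rho$ is the graded dual of the grafting map, i.e. $\langle\nu\car\omega',\omega\rangle=\langle\nu\otimes\omega',\rho(\omega)\rangle$; the lemma then follows by unfolding the definition of $\mathcal{P}_\nu$.

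First I would settle the generating cases of Definition \ref{grafting}. The relation $\one\car\omega'=\omega'$ is dual to the fact that the empty cut is the unique $c\in LAC(\omega)$ with $P^c(\omega)=\one$, and that it satisfies $R^c(\omega)=\omega$; the relation $\tau\car\one=0$ for $\tau\neq\one$ is dual to the absence of any left admissible cut with $R^c(\omega)=\one$, since such a cut would be the full cut, which is deliberately excluded from $LAC$. The base relation $\omega\car\ab=B^+(\omega)$ provides the generating step of the induction: attaching the word $\omega$ beneath a single new vertex is dual to the elementary cut that detaches, just above the root of the target, the left-closed family of branches recorded by $\langle\,\cdot\,,P^c\rangle$, leaving the remaining part $\ab$.

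The inductive step, by induction on $|\omega|$, splits along the two remaining recursions. The Leibniz relation $\tau\car(\omega_1\omega_2)=(\tau\car\omega_1)\omega_2+\omega_1(\tau\car\omega_2)$ dualizes to the compatibility of $\rho$ with deconcatenation of the target, which is routine. The genuinely delicate step is the forest recursion $(\tau\omega)\car\omega_1=\tau\car(\omega\car\omega_1)-(\tau\car\omega)\car\omega_1$, which reduces grafting by a forest to iterated grafting by single trees. Dually, this is exactly the step that forces the pruned parts of the distinct elementary cuts composing $c$ to be shuffled together, with no internal shuffling of the pieces coming from a single elementary cut --- precisely the definition of a left admissible cut. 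I expect this to be the main obstacle: one must check that the planar, left-closed condition on elementary cuts together with the shuffle of their pruned forests reproduces exactly the multiplicities produced by iterated left grafting. I would dispatch it by a secondary induction on the number of trees in $\nu$, equivalently on the number of elementary cuts assembled into $c$: grafting one further tree corresponds to adjoining one further elementary cut, and the pairing $\langle\nu,P^c(\omega)\rangle$ against the shuffled pruned forest is the dual of the bracket-like right-hand side of the forest recursion. The remaining verifications are bookkeeping of coefficients.
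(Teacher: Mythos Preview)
Your argument is correct, but it follows a noticeably different route from the paper's own proof. You dualize the recursive clauses of Definition~\ref{grafting} one by one, carrying an induction on $|\omega|$ (equivalently on $|\nu|+|\omega'|$) and a secondary induction on the number of trees in $\nu$ to unwind the forest recursion $(\tau\omega)\car\omega_1=\tau\car(\omega\car\omega_1)-(\tau\car\omega)\car\omega_1$ into the shuffle structure of $P^c$. The paper, by contrast, does not touch the recursion at all: it simply observes that an elementary left cut at a vertex is, by construction, the inverse of attaching a word of subtrees at that vertex (this is what $\omega\car\ab=B^+(\omega)$ encodes locally), and that combining elementary cuts at distinct vertices into a left admissible cut --- with the pruned pieces shuffled but not internally permuted --- is precisely the inverse of distributing the trees of $\nu$ over distinct attachment sites. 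The whole proof is a one-line bijection, illustrated by a single example.

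What each approach buys: the paper's argument is conceptually transparent and explains \emph{why} the shuffle appears (different elementary cuts correspond to different target vertices, and the order in which one lists them is irrelevant), but it is closer to a sketch than a proof and leaves the multiplicity bookkeeping to the reader. Your inductive argument is longer and hides the geometric picture, but it is self-contained and makes the coefficient matching explicit; in particular, your secondary induction is exactly the place where one verifies rigorously that the left-closed condition on elementary cuts reproduces the correct multiplicities --- something the paper asserts rather than checks.
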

\begin{proof}
An elementary cut at an edge growing out of a node $n$ of a forest $\omega$ is the dual operation of attaching trees via edges to the node $n$ in a certain order, e.g. 
\[\aaabaabbbb = (\AB \AabB) \car \aABb,\]
where the white nodes indicates where the attachment is done. The shuffling in $P^c(\omega)$ corresponds to the dual of attaching forests in all possible ways to different nodes. Hence the dual of grafting is given by
\[\sum_{c \in LAC(\omega)} P^c(\omega) \otimes R^c(\omega).\]
\end{proof}

\begin{theorem}\label{substlawformula}
We have 
\begin{equation*}\label{eq:substlaw}
a_{\star}^T (\omega) = \sum_{(\omega) \in \Delta_c} \sum_{c \in LAC(\omega_{(2)})} a_{\star}^T(\omega_{(1)}) B^+\left(a_{\star}^T(P^c(\omega_{(2)}))\right) a(R^c(\omega_{(2)})),
\end{equation*}
if $\omega \neq \one$, and $a_{\star}^T(\one) = \one$. Here $\Delta_c$ denotes deconcatenation (Section \ref{sect:algebra}).
\end{theorem}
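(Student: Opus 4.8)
The plan is to derive the formula by dualizing, term by term, a forward recursion for the substitution map $a\star$ itself. The only genuinely algebraic input is this recursion, which I would record first. Every forest $\omega\neq\one$ has a unique magmatic decomposition $\omega=\omega_L\times\omega_R=\omega_L B^+(\omega_R)$, where $\omega_L$ is the forest of all but the last tree and $B^+(\omega_R)$ is the last tree. Because $a\star$ is a D-algebra homomorphism it is multiplicative for concatenation and preserves grafting, and since $B^+(\nu)=\nu\car\ab$ and $a\star\ab=a(\ab)=:a$, this yields
\[
a\star\omega \;=\; (a\star\omega_L)\,\big((a\star\omega_R)\car a\big).
\]
Everything after this point is formal dualization, using three facts already available: concatenation is dual to the deconcatenation coproduct $\Delta_c$, grafting $\car$ is dual to pruning (the preceding lemma), and the magmatic product is self-dual for the pairing.

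Next I would pair $a\star\psi$ against an arbitrary basis forest $\omega$, applying the recursion to $\psi=\psi_L B^+(\psi_R)$. Expanding the outer concatenation through product--coproduct duality produces the sum over $\Delta_c(\omega)=\sum\omega_{(1)}\otimes\omega_{(2)}$ together with the factor $\langle a\star\psi_L,\omega_{(1)}\rangle=\langle\psi_L, a_{\star}^{T}(\omega_{(1)})\rangle$. Applying the pruning lemma to the remaining grafting pairing $\langle(a\star\psi_R)\car a,\ \omega_{(2)}\rangle$ turns it into $\sum_{c\in LAC(\omega_{(2)})}\langle a\star\psi_R, P^c(\omega_{(2)})\rangle\,\langle a, R^c(\omega_{(2)})\rangle$, where $\langle a, R^c(\omega_{(2)})\rangle$ is exactly the scalar written $a(R^c(\omega_{(2)}))$ and $\langle a\star\psi_R, P^c(\omega_{(2)})\rangle=\langle\psi_R, a_{\star}^{T}(P^c(\omega_{(2)}))\rangle$. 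Finally I would recombine the two factors paired against $\psi_L$ and $\psi_R$ using the identity $\langle\psi_L,X\rangle\langle\psi_R,Y\rangle=\langle\psi_L B^+(\psi_R),\,X B^+(Y)\rangle$, which holds because $B^+$ is injective and the forests form an orthonormal basis. This collapses $\psi_L B^+(\psi_R)=\psi$ and assembles $a_{\star}^{T}(\omega_{(1)})B^+\!\big(a_{\star}^{T}(P^c(\omega_{(2)}))\big)$ on the other side. Since the resulting identity $\langle a\star\psi,\omega\rangle=\langle\psi,(\text{RHS})\rangle$ holds for every $\psi$, comparison of coefficients gives the stated formula; the case $\omega=\one$ is immediate from $a\star\one=\one$, and no separate induction is needed since each factor is rewritten through the definition of $a_{\star}^{T}$ rather than through the formula being proved.

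I expect the main obstacle to be the grafting step: verifying that the dual of $\cdot\car a$ produces precisely the left admissible cuts $LAC(\omega_{(2)})$, with the pruned parts $P^c$ paired against the grafted argument and the remaining parts $R^c$ paired against the fixed value $a$. This rests on the description of cutting a forest via $B^+$ and $B^-$ being dual to adding a root and grafting onto it, and on carefully tracking which pieces pair with $\psi_L$, with $\psi_R$, and with the scalar $a$. Once the grafting duality is pinned down, the appearance of $\Delta_c$ and the self-duality of the magmatic product are routine bookkeeping.
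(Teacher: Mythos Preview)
Your proposal is correct and follows essentially the same route as the paper's proof: both arguments rest on the forward identity $a\star(\nu\tau)=(a\star\nu)\big((a\star B^-\tau)\car a\big)$, dualize the concatenation via $\Delta_c$, and dualize the grafting via the pruning lemma. The only difference is organizational: the paper first isolates the tree projection $\pi_{\OT}(a_\star^T\omega)$ and then extends to forests by summing over $\nu\tau$, whereas you carry out both steps simultaneously by fixing $\psi=\psi_L B^+(\psi_R)$ and invoking the self-duality of the magmatic product; the computations are otherwise line-for-line equivalent.
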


\noindent Note that using the magmatic product $\times$ defined in Section \ref{sect:algebra}, this can also be written as: 
\begin{equation}\label{eq:magmasubstlaw}
a_{\star}^T = \mu \circ (\mu_{\times} \otimes I)\circ (a_{\star}^T \otimes a_{\star}^T \otimes a) \circ (I\otimes \Delta'_N) \circ \Delta_c,
\end{equation}
where $\mu$ is concatenation, $\Delta_N$ is the coproduct in $\Hn$, and $\Delta'_N(\omega) = \Delta_N(\omega) - \omega \otimes \one$ for all forests $\omega$.

\begin{proof}
We first prove the formula for ordered trees. Let $\pi_{\OT}$ denote the projection of forests onto trees: $\pi_{\OT}(\omega) = \sum_{\tau \in \OT} \langle \tau, \omega \rangle \omega$. Recall that 
$$\langle a \star \omega', \omega \rangle = \langle \omega', a_{\star}^T(\omega) \rangle, \hspace{1cm} \langle \nu\car\omega',\omega \rangle = \langle \omega', \mathcal{P}_{\nu}(\omega) \rangle.$$
We have
\begin{eqnarray*}
\pi_{\OT}(a_{\star}^T \omega) = \sum_{\tau \in \OT} \left\langle \tau, a_{\star}^T \omega \right\rangle \tau 
&=& \sum_{\nu \in \OF} \left\langle \nu\car\ab, a_{\star}^T(\omega)\right\rangle \nu\car\ab \\
&=& \sum_{\nu \in \OF} \left\langle a \star (\nu\car\ab), \omega\right\rangle \nu\car\ab \\
&=& \sum_{\nu \in \OF} \left\langle (a \star\nu)\car a, \omega \right\rangle \nu\car\ab \\
&=& \sum_{\nu \in \OF} \left\langle a, \mathcal{P}_{a\star\nu} \omega \right\rangle \nu\car\ab\\
&=& \sum_{c \in LAC(\omega)} \sum_{\nu \in \OF} \left\langle a\star \nu, P^c(\omega)\right\rangle \left\langle a, R^c(\omega)\right\rangle \nu\car\ab.
\end{eqnarray*}
Hence, 
\begin{eqnarray*}
\pi_{\OT}(a_{\star}^T \omega) &=& \sum_{c \in LAC(\omega)} \sum_{\nu \in \OF} \left\langle \nu, a_{\star}^T P^c(\omega) \right\rangle (\nu\car\ab) a(R^c(\omega)) \\
&=& \sum_{c \in LAC(\omega)} \left((a_{\star}^T(P^c(\omega))\car\ab\right) a(R^c(\omega))\\
&=& \sum_{c \in LAC(\omega)} B^+\left(a_{\star}^T(P^c(\omega))\right) a(R^c(\omega)).
\end{eqnarray*}
The general formula is established by the following calculation, where $\tau$ is a tree:
\begin{eqnarray*}
a_{\star}^T(\omega) &=& \sum_{\nu,\tau} \left\langle \nu\tau, a_{\star}^T(\omega)\right\rangle \nu\tau \\
&=& \sum_{\nu,\tau} \left\langle (a_{\star} \nu)(a_{\star}\tau), \omega \right\rangle \nu\tau \\
&=& \sum_{(\omega) \in \Delta_c} \sum_{\nu, \tau} \left\langle a_{\star} \nu, \omega_{(1)} \right\rangle\left\langle a_{\star} \tau, \omega_{(2)} \right\rangle \nu\tau\\
&=& \sum_{(\omega) \in \Delta_c} (a_{\star}^T\omega_{(1)}) (\pi_{\OT}(a_{\star}^T \omega_{(2)})). 
\end{eqnarray*}
\end{proof}
\noindent As an example, the formula applied to the tree $\aababb$
yields $$a_{\star}^T(\aababb) = a(\aababb)B^+(\one) +
a(\ab)a(\aabb)B^+(\ab) = a(\aababb)\ab + a(\ab)a(\aabb)\aabb +
a(\ab)^3 \aababb.$$
See Table \ref{tab:subst} where this formula is computed for all forests up to order $4$, under the assumption that $a$ is an infinitesimal character.

\begin{proposition}
The map $a_{\star}^T$ is a character for the shuffle product:
\[a_{\star}^t(\omega_1 \shuffle \omega_2) = a_{\star}^t(\omega_1) \shuffle a_{\star}^t(\omega_2).\]
\end{proposition}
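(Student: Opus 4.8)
The plan is to establish that $a_\star^T$ is an algebra morphism from the shuffle algebra $\mathcal{H}_{Sh}$ to itself, i.e.\ that $a_\star^T(\omega_1 \shuffle \omega_2) = a_\star^T(\omega_1) \shuffle a_\star^T(\omega_2)$, by dualizing the coalgebra structure already established for $a\star$. The key observation is that the shuffle product $\shuffle$ on $\N$ is dual to the deshuffle coproduct $\Delta_{Sh}$ on $\N$, since the words of $\OF$ form an orthonormal basis for the pairing $\langle\cdot,\cdot\rangle$. Under this duality, the statement that $a_\star^T$ is a morphism for $\shuffle$ is exactly the transpose of the statement that $a\star$ is a morphism for $\Delta_{Sh}$. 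Therefore the proposition follows essentially for free from Lemma~\ref{coalg_subst}, which asserts that $(a\star \otimes a\star) \circ \Delta_{Sh} = \Delta_{Sh} \circ a\star$.

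Concretely, I would argue as follows. For any forests $\omega_1, \omega_2$ and any test forest $\omega$, I compute
\[
\langle a_\star^T(\omega_1 \shuffle \omega_2), \omega \rangle
= \langle \omega_1 \shuffle \omega_2, a\star\, \omega \rangle
= \langle \omega_1 \otimes \omega_2, \Delta_{Sh}(a\star\, \omega)\rangle,
\]
where the first equality is the definition of the dual map $a_\star^T$ and the second uses that $\Delta_{Sh}$ is the adjoint of $\shuffle$ with respect to the orthonormal pairing. Applying Lemma~\ref{coalg_subst} to replace $\Delta_{Sh}(a\star\, \omega)$ by $(a\star \otimes a\star)\Delta_{Sh}(\omega)$, and then sliding the two copies of $a\star$ back across the pairing as $a_\star^T$, gives
\[
\langle \omega_1 \otimes \omega_2, (a\star \otimes a\star)\Delta_{Sh}(\omega)\rangle
= \langle a_\star^T(\omega_1) \otimes a_\star^T(\omega_2), \Delta_{Sh}(\omega)\rangle
= \langle a_\star^T(\omega_1) \shuffle a_\star^T(\omega_2), \omega \rangle.
\]
Since this holds for all $\omega$, and the forests span $\N$, the two sides agree as elements of $\N^*$, which is the claim.

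The main obstacle is not conceptual but book-keeping: I must verify carefully that $\Delta_{Sh}$ is genuinely the adjoint of the shuffle product under the chosen pairing (this is the standard shuffle/deshuffle duality noted in the excerpt's footnote, but it should be stated explicitly), and that the tensor-product pairing $\langle \cdot\otimes\cdot, \cdot\otimes\cdot\rangle$ is handled consistently when transposing $a\star \otimes a\star$ into $a_\star^T \otimes a_\star^T$. I should also confirm that the unit is preserved, $a_\star^T(\one) = \one$, which is immediate from Theorem~\ref{substlawformula}; together with multiplicativity this makes $a_\star^T$ a genuine character (algebra morphism) rather than merely a map respecting $\shuffle$. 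A small point worth flagging is that the statement is for the \emph{finite} space $\N$, so no projective-limit subtleties arise, though the same argument extends to $\N^*$ by the grading.
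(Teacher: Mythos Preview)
Your argument is correct and is exactly the paper's approach: the paper's proof simply states that the shuffle product is dual to the deshuffle coproduct, so the result follows from Lemma~\ref{coalg_subst} by dualization. You have spelled out this dualization in full detail, but the underlying idea is identical.
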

\begin{proof}
The shuffle product is dual to the deshuffle coproduct, so the result follows from Lemma \ref{coalg_subst} by dualization.
\end{proof}

\begin{remark}
There is a similar formula for the substitution of B-series, only with the coproduct $\Delta_N$ replaced by the Connes--Kreimer coproduct $\Delta_{CK} = \sum_{c \in AC(\tau)} P_{CK}^c(\tau) \otimes R_{CK}^c(\tau)$:
\begin{equation}
a_{\star}^T(\tau) = \sum_{c \in AC(\tau)}B^+\left(a_{\star}^T(P_{CK}^c(\tau))\right)a(R_{CK}^c(\tau))
\end{equation}
The proof of this formula is analogous to the proof of Theorem \ref{substlawformula}. This gives a recursive version of the coproduct in the substitution bialgebra $H_{CEFM}$ of \cite{calaque2009tih}
\end{remark}

\subsection{Backward error analysis and modified vector fields}\label{bea}
Recall the results on backward error analysis in \cite{chartier2005asl}: Given a B-series method $B_f(\alpha)$ there is a \emph{modified vector field} $\tilde{f}$ so that the B-series method applied to $f$ generates the exact flow of $\tilde{f}$. Moreover, $\tilde{f}$ can be written as a B-series with coefficients $\beta$ satisfying $\beta \star \gamma_{\text{Exact}} = \alpha$, where $\gamma_{\text{Exact}}$ is the coefficient function for the B-series of the exact flow, and $\star$ is the substitution law for characters in the Connes--Kreimer Hopf algebra $H_{CK}$. To generalize to LB-series, consider a numerical solution of the differential equation 
\begin{equation}
y' = f(y)\cdot y
\end{equation}
written in terms of a LB-series $\Bs_f(\alpha)$. We interpret it as the exact solution of a modified differential equation $y' = \tilde{f}(y) \cdot y$. As in the classical case, it turns out that the modified vector field can be written as a LB-series $\tilde{f} = \Bs_f(\beta)$. Furthermore, $\tilde{f}$ is such that 
\begin{equation}
\Bs_{\Bs_f(\beta)}(\gamma_{\text{Exact}}) = \Bs_f(\alpha),
\end{equation}
 where $\gamma_{\text{Exact}}$ represents the coefficients of the exact solution as described in Section \ref{flows}. This result follows by applying Proposition \ref{substBseries}.

\begin{theorem}
Let $\Bs_f(\alpha)$ be a LB-series method. There is a modified vector field $\tilde{f}$, given by $\tilde{f} = \Bs_f(\beta)$ such that $$\Bs_{\tilde{f}}(\gamma_{\text{Exact}}) = \Bs_f(\beta).$$ Moreover, $$\beta * \gamma_{\text{Exact}} = \alpha.$$
\end{theorem}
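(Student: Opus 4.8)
The plan is to obtain the theorem entirely from the substitution theorem (Theorem~\ref{substBseries}), reducing the geometric statement to a single algebraic equation on coefficient series. First I would set $\tilde f = \Bs_f(\beta)$ and apply Theorem~\ref{substBseries} with the exact-flow series $\gamma_{\text{Exact}}$ in place of $\alpha$, which gives $\Bs_{\tilde f}(\gamma_{\text{Exact}}) = \Bs_{\Bs_f(\beta)}(\gamma_{\text{Exact}}) = \Bs_f(\beta \star \gamma_{\text{Exact}})$. Thus the requirement that the exact flow of the modified field reproduce the method, $\Bs_{\tilde f}(\gamma_{\text{Exact}}) = \Bs_f(\alpha)$, is equivalent to the coefficient identity $\beta \star \gamma_{\text{Exact}} = \alpha$, the product here being the substitution law $\star$. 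The entire theorem therefore reduces to exhibiting a coefficient series $\beta$ solving this equation, together with a check that $\beta$ is of the type for which $\Bs_f(\beta)$ is a genuine vector field.

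Next I would construct $\beta$ by solving $\beta \star \gamma_{\text{Exact}} = \alpha$ recursively in the grading of $\N^*$. Since $\beta\star$ is the D-algebra homomorphism sending the single color $\ab$ to $\beta$, and since $\gamma_{\text{Exact}} = \ab + \frac{1}{2}\aabb + \cdots$ has leading coefficient $\gamma_{\text{Exact}}(\ab) = 1$, the order-$1$ part of $\beta \star \gamma_{\text{Exact}}$ is just $\beta(\ab)\,\ab$, forcing $\beta(\ab) = \alpha(\ab)$. For a forest of order $n$, the only contribution to $\beta \star \gamma_{\text{Exact}}$ involving the order-$n$ part of $\beta$ comes from the summand $\gamma_{\text{Exact}}(\ab)\,\beta\star(\ab) = \beta$; every other summand $\gamma_{\text{Exact}}(\omega)\,\beta\star(\omega)$ with $\abs{\omega}\geq 2$ is a combination of graftings of at least two copies of $\beta$ and so depends only on coefficients of $\beta$ of order $<n$. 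Because the weight $\gamma_{\text{Exact}}(\ab)=1$ never vanishes, the order-$n$ coefficients of $\beta$ are determined uniquely from lower orders; the projective-limit structure of $\N^*$ then assembles these into a unique series $\beta$.

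The hard part will be verifying that this $\beta$ has the correct algebraic type, namely that it is an infinitesimal character of $\Hn$ (a Lie series in $D(\N^*)$): only then is the substitution $\beta\star$ actually defined and $\tilde f = \Bs_f(\beta)$ a genuine modified vector field in $\g^\M$. I would handle this using that $\alpha$ and $\gamma_{\text{Exact}}$ are characters of $\Hn$, group-like for the deshuffle coproduct, together with Lemma~\ref{coalg_subst}, which says substitution is a coalgebra morphism for $\Delta_{Sh}$; substitution by a Lie series thus preserves group-likeness, and these substitutions compose to a group acting on the characters, within which the equation $\beta\star(\gamma_{\text{Exact}}) = \alpha$ is solved by an infinitesimal character $\beta$. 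Since the full Hopf-algebraic description of the substitution law is still under development (see the remark after Lemma~\ref{coalg_subst}), I would keep the argument self-contained, using the grading recursion above for existence and uniqueness and then checking primitivity of $\beta$ order by order from the identity $(\beta\star \otimes \beta\star)\circ\Delta_{Sh} = \Delta_{Sh}\circ(\beta\star)$. Once $\beta \in D(\N^*)$ is secured, the theorem follows from the reduction of the first paragraph.
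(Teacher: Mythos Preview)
Your approach is correct and is essentially the same as the paper's: the paper's entire justification is the single sentence ``This result follows by applying Proposition~\ref{substBseries}'' (i.e.\ Theorem~\ref{substBseries}), which is exactly your first reduction $\Bs_{\tilde f}(\gamma_{\text{Exact}}) = \Bs_f(\beta\star\gamma_{\text{Exact}})$. Your recursive construction of $\beta$ from $\beta\star\gamma_{\text{Exact}}=\alpha$ and the check that $\beta$ is an infinitesimal character go beyond what the paper spells out, but they are the natural way to make that one-line proof honest.
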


\begin{example}[The exponential Euler method]
The exponential Euler method is given by
$$y_{n+1} = \exp(hf(y_n))y_n.$$
In Example \ref{eulerMethod} the coefficients of the LB-series for this method was seen to be $\gamma = \ab$. To get the backward error, we calculate $\beta= Q(\ab) \circ e$, or $\log^*(Q(\ab))$ (cf. Section \ref{flows})
\begin{eqnarray*}
\beta = \ab -\frac{1}{2} \aabb + \frac{1}{3} \aaabbb + \frac{1}{12} \aababb - \frac{1}{12} \ab \aabb + \frac{1}{12} \aabb \ab - \frac{1}{4} \aaaabbbb - \frac{1}{12} \aaababbb - \frac{1}{12} \aaabbabb + \frac{1}{12} \ab \aaabbb - \frac{1}{12} \aaabbb \ab + \frac{1}{24} \ab \aababb - \frac{1}{24} \aababb\ab
\end{eqnarray*}
In the classical setting this logarithm has been studied as $\log^*(\delta)$, for a certain character $\delta$ \cite{chapoton2002rta, murua2006tha}.
\end{example}

\section{Implementation}\label{implement} As pointed out in Section \ref{sect:algebra}, the set of forests $\F$ can be generated recursively using a \emph{magmatic product} $\times$ defined on two forests $\omega_1$ and $\omega_2$ by 
\begin{equation}
\omega_1 \times \omega_2 = \omega_1B^+(\omega_2)
\end{equation}
by starting with the empty tree $\one$. Each forest in $\F$ can uniquely be written as a word in $\one$ and $\times$. Recall that if $\omega = \omega_1 \times \omega_2$, then we call $\omega_1$ the \emph{left part}, $\omega_L$, and $\omega_2$ the \emph{right part}, $\omega_R$, of $\omega$. All the basic algebraic operations used to construct the substitution law can be formulated in terms of this product:
\begin{itemize}
\item[] Concatenation: $\omega\,\one = \one\,\omega = \omega$, and $(\omega_1\times \omega_2)\,\omega_3 = \omega_1 \times (\omega_2 \,\omega_3)$.
\item[]Shuffle: $\omega \shuffle \one = \one \shuffle \omega = \omega$, and $\omega_1 \shuffle \omega_2 = (\omega_1 \shuffle \omega_{2L}) \times \omega_{1R} + (\omega_{1L} \shuffle \omega_2) \times \omega_{1R}$
\item[]Coproduct: $\Delta_N(\one) = \one \otimes \one$, and $\Delta_N(\omega) = \omega \otimes \one + \Delta_N(\omega_L) \shuffle \times \Delta_N(\omega_R)$
\end{itemize}
The formula (\ref{eq:magmasubstlaw}) for the substitution law in Theorem \ref{substlawformula} therefore lends itself well to implementation. 

\paragraph{Representing the free magma:} One way to represent the free magma is by using \emph{well-formed} words of parentheses `$($' and `$)$'. A word $w$ is well-formed if it is made of parentheses coming in pairs of one left and one right bracket, such that the left bracket appears on the left of the corresponding right bracket in $w$. For example,  $(())()$ is a well-formed word. The set of forests equipped with the product $\times$ is then isomorphic to this free magma via the recursion $\one = ()$,  $\omega_1 \times \omega_2 = (\omega_1)\omega_2.$

The authors have implemented a variant of the free magma, with elements represented by parentheses, and also the basic operations discussed in this paper. In future work, this implementation will be used to do backward error analysis on interesting test cases, like the dynamics of rigid bodies.

\section*{Acknowledgements}
We are grateful to Kurusch Ebrahimi-Fard, Dominique Manchon and Jon-Eivind Vatne for interesting and enlightening discussions, and to the anonymous referees for their valuable comments.  We would also like to acknowledge support from the Aurora Program, project 205042/V11.

% TABELLER

\begin{table}[ ]
  \centering
  \begin{equation*}
    \begin{array}{c@{\,\,}|@{\quad}l}
      \hline \\[-2mm]
      \omega & \Delta_N(\omega)  \\[1mm]
      \hline \\[-2mm]
      \one & \one\tpr\one \\[1mm]
      \ab & \ab\tpr\one+\one\tpr\ab \\[2mm]
      \aabb & \aabb\tpr\one+\ab\tpr\ab+\one\tpr\aabb  \\[2mm]
      \ab\ab & \ab\ab\tpr\one+\ab\tpr\ab+\one\tpr\ab\ab \\[2mm]
      \aaabbb &\aaabbb\tpr\one+\ab\tpr\aabb+\aabb\tpr\ab+\one\tpr\aaabbb \\[2.5mm]
      \aababb & \aababb\tpr\one+\ab\ab\tpr\ab+\ab\tpr\aabb+\one\tpr\aababb \\[2.5mm]
      \ab\aabb & \ab\aabb\tpr\one+2\ab\ab\tpr\ab      +\ab\tpr\aabb+\ab\tpr\ab\ab+\one\tpr\ab\aabb \\[2.5mm]
      \aabb\ab & \aabb\ab\tpr\one+\aabb\tpr\ab+\ab\tpr\ab\ab+\one\tpr\aabb\ab \\[2mm]
      \ab\ab\ab & \ab\ab\ab\tpr\one+\ab\ab\tpr\ab+\ab\tpr\ab\ab+\one\tpr\ab\ab\ab \\[2mm]
      \aaaabbbb & \aaaabbbb\tpr\one+\aaabbb\tpr\ab+\aabb\tpr\aabb+
      \ab\tpr\aaabbb+\one\tpr\aaaabbbb \\[2.5mm]
      \aaababbb & \aaababbb\tpr\one+\aababb\tpr\ab+\ab\ab\tpr\aabb+
      \ab\tpr\aaabbb+\one\tpr\aaababbb \\[2.5mm]
      \aabaabbb & \aabaabbb\tpr\one+\ab\aabb\tpr\ab+2\ab\ab\tpr
      \aabb+\ab\tpr\aaabbb+\ab\tpr\aababb+\one\tpr\aabaabbb \\[2.5mm]%hmk
      \aaabbabb & \aaabbabb\tpr\one+\aabb\ab\tpr\ab+\aabb\tpr
      \aabb+\ab\tpr\aababb+\one\tpr\aaabbabb \\[2.5mm]
      \aabababb & \aabababb\tpr\one+\ab\ab\ab\tpr\ab+\ab\ab\tpr      \aabb+\ab\tpr\aababb+\one\tpr\aabababb \\[2.5mm]
      \ab\aaabbb & \ab\aaabbb\tpr\one+\ab\aabb\tpr\ab+\aabb\ab\tpr\ab+\aabb\tpr      \ab\ab+2\ab\ab\tpr\aabb+\ab\tpr\ab\aabb+\ab\tpr\aaabbb+\one\tpr\ab\aaabbb \\[2.5mm]
      \aaabbb\ab & \aaabbb\ab\tpr\one+\aaabbb\tpr\ab+\aabb\tpr      \ab\ab+\ab\tpr\aabb\ab+\one\tpr\aaabbb\ab \\[2.5mm]
      \ab\aababb & \ab\aababb\tpr\one+3\ab\ab\ab\tpr\ab+      \ab\ab\tpr\ab\ab+2\ab\ab\tpr\aabb+     \ab\tpr\ab\aabb+\ab\tpr\aababb+\one\tpr\ab\aababb \\[2.5mm]
      \aababb\ab &  \aababb\ab\tpr\one+\aababb\tpr\ab+\ab\ab\tpr      \ab\ab+\ab\tpr\aabb\ab+\one\tpr\aababb\ab \\[2.5mm]
      \aabb\aabb & \aabb\aabb\tpr\one+\aabb\ab\tpr\ab+\ab\aabb\tpr\ab      +\aabb\tpr\aabb+2\ab\ab\tpr\ab\ab+\ab\tpr\ab\aabb+\ab\tpr\aabb\ab+ \one\tpr\aabb\aabb \\[2.5mm]
      \ab\ab\aabb & \ab\ab\aabb\tpr\one+3\ab\ab\ab\tpr\ab+      2\ab\ab\tpr\ab\ab+\ab\ab\tpr\aabb+      \ab\tpr\ab\ab\ab+\ab\tpr\ab\aabb+\one\tpr\ab\ab\aabb \\[2.5mm]
      \ab\aabb\ab & \ab\aabb\ab\tpr\one+\ab\aabb\tpr\ab+     2\ab\ab\tpr\ab\ab+\ab\tpr\ab\ab\ab+\ab\tpr\aabb\ab+\one\tpr\ab\aabb\ab \\[2.5mm]
     \aabb\ab\ab & \aabb\ab\ab\tpr\one+\aabb\ab\tpr\ab+    \aabb\tpr\ab\ab+\ab\tpr\ab\ab\ab+\one\tpr\aabb\ab\ab \\[2mm]
      \ab\ab\ab\ab & \ab\ab\ab\ab\tpr\one+\ab\ab\ab\tpr\ab+      \ab\ab\tpr\ab\ab+\ab\tpr\ab\ab\ab+\one\tpr\ab\ab\ab\ab \\[2mm] \hline
    \end{array}
  \end{equation*}
  \caption{Examples of the coproduct $\Delta_N$ \label{coprod}}
\end{table}

\begin{table}[ ]
  \centering
  \begin{equation*}
    \begin{array}{c@{\,\,}|@{\quad}l}
      \hline \\
      \omega & \alpha_{\star}^T(\omega)  \\[2mm] 
      \hline \\[-1mm]
      \one & \one \\ 
      \ab & \alpha(\ab)\ab \\[2mm]     
      \aabb & \alpha(\aabb)\ab + \alpha(\ab)^2\aabb
      \\[1.5mm] 
      \ab\ab & \alpha(\ab)^2\ab\ab \\[2mm]
      \aaabbb & \alpha(\aaabbb)\ab +2 \alpha(\ab)\alpha(\aabb)\aabb + \alpha(\ab)^3 \aaabbb  \\[2.5mm] 
      \aababb & \alpha(\aababb)\ab + \alpha(\ab)\alpha(\aabb)\aabb + \alpha(\ab)^3\aababb\\[2.5mm]
      \ab\aabb & \alpha(\ab\aabb)\ab + \alpha(\ab)\alpha(\aabb) \ab\ab + \alpha(\ab)^3\ab\aabb \\[2.5mm]
      \aabb\ab &  \alpha(\aabb\ab)\ab + \alpha(\ab)\alpha(\aabb)\ab\ab + \alpha(\ab)^3 \aabb\ab\\[2.5mm]
      \ab\ab\ab &  \alpha(\ab)^3 \ab\ab\ab\\[2mm]
      \aaaabbbb &  \alpha(\aaaabbbb)\ab + 2\alpha(\ab)\alpha(\aaabbb)\aabb + \alpha(\aabb)^2\aabb + 3\alpha(\ab)^2 \alpha(\aabb)\aaabbb + \alpha(\ab)^4 \aaaabbbb\\[2.5mm]
      \aaababbb &  \alpha(\aaababbb)\ab + \alpha(\ab)\alpha(\aaabbb)\aabb + \alpha(\ab)\alpha(\aababb) \aabb + \alpha(\ab)^2 \alpha(\aabb)\left(\aababb + \aaabbb\right) + \alpha(\ab)^4 \aaababbb\\[2.5mm]  
      \aabaabbb &  \alpha(\aabaabbb)\ab + \alpha(\ab)\alpha(\ab\aabb) \aabb + \alpha(\ab)\alpha(\aaabbb)\aabb + \alpha(\ab)\alpha(\aababb)\aabb + 3\alpha(\ab)^2\alpha(\aabb)\aababb + \alpha(\ab)^4 \aabaabbb\\[2.5mm]
      \aaabbabb &  \alpha(\aaabbabb)\ab + \alpha(\ab)\alpha(\aabb\ab)\aabb + \alpha(\ab)\alpha(\aababb) \aabb + \alpha(\aabb)^2\aabb + \alpha(\ab)^2\alpha(\aabb)\left(\aababb + \aaabbb\right) + \alpha(\ab)^4 \aaabbabb\\[2.5mm] 
      \aabababb &  \alpha(\aabababb)\ab + \alpha(\ab)\alpha(\aababb)\aabb + \alpha(\ab)^2 \alpha(\aabb)\aababb + \alpha(\ab)^4\aabababb\\[2.5mm]  
      \ab\aaabbb &  \alpha(\ab\aaabbb)\ab + \alpha(\ab)\alpha(\ab\aabb)\aabb + \alpha(\ab)\alpha(\aaabbb)\ab\ab + 2\alpha(\ab)^2 \alpha(\aabb)\ab\aabb + \alpha(\ab)^4\ab\aaabbb\\[2.5mm]  
      \aaabbb\ab &  \alpha(\aaabbb\ab) \ab + \alpha(\ab)\alpha(\aabb\ab)\aabb + \alpha(\ab)\alpha(\aaabbb)\ab\ab + 2\alpha(\ab)^2\alpha(\aabb)\aabb\ab + \alpha(\ab)^4 \aaabbb\ab\\[2.5mm]   
      \ab\aababb &  \alpha(\ab\aababb)\ab + \alpha(\ab)\alpha(\ab\aabb)\aabb + \alpha(\ab)\alpha(\aababb)\ab\ab + \alpha(\ab)^2 \alpha(\aabb)\ab\aabb + \alpha(\ab)^4 \ab\aababb\\[2.5mm]   
      \aababb\ab &  \alpha(\aababb\ab)\ab + \alpha(\ab)\alpha(\aabb\ab)\aabb + \alpha(\ab)\alpha(\aababb)\ab\ab + \alpha(\ab)^2\alpha(\aabb)\aabb\ab + \alpha(\ab)^4\aababb\ab\\[2.5mm]   
      \aabb\aabb & \alpha(\aabb)^2\ab\ab + \alpha(\ab)^2\alpha(\aabb)\left(\ab\aabb + \aabb\ab\right) + \alpha(\ab)^4\aabb\aabb\\[2.5mm]
      \ab\ab\aabb &  \alpha(\ab\ab\aabb)\ab + \alpha(\ab)\alpha(\ab\aabb)\ab\ab + \alpha(\ab)^2\alpha(\aabb)\ab\ab\ab + \alpha(\ab)^4 \ab\ab\aabb\\[2.5mm] 
      \ab\aabb\ab &  \alpha(\ab\aabb\ab)\ab + \alpha(\ab)^2\alpha(\aabb)\ab\ab\ab + \alpha(\ab)^4 \ab\aabb\ab\\[2.5mm] 
      \aabb\ab\ab &  \alpha(\aabb\ab\ab)\ab + \alpha(\ab)\alpha(\aabb\ab)\ab\ab + \alpha(\ab)^2\alpha(\aabb) \ab\ab\ab + \alpha(\ab)^4\aabb\ab\ab\\[2.5mm] 
      \ab\ab\ab\ab & \alpha(\ab)^4\ab\ab\ab\ab\\[2mm] \hline 
    \end{array}
  \end{equation*}
  \caption{Examples of the substitution character $\alpha_{\star}^T$, where $\alpha$ is an infinitesimal character,  for all forests up to and including order four. \label{tab:subst}}
\end{table}

\newpage
\bibliography{../../../bibliography/ref_alex}

\end{document}